\documentclass[12pt, leqno]{article}
\usepackage{amsmath,amsfonts,amsthm,amscd,amssymb,verbatim} 
\usepackage[all]{xy}

\numberwithin{equation}{section}

\theoremstyle{plain}
\newtheorem{theorem}{Theorem}[section]

\newtheorem{proposition}[theorem]{Proposition}

\newtheorem{corollary}[theorem]{Corollary}

\theoremstyle{definition}
\newtheorem{remark}[theorem]{Remark}

\newcommand{\beast}{\begin{eqnarray*}}
\newcommand{\east}{\end{eqnarray*}}

\newcommand{\N}{{\Bbb N}}
\newcommand{\Z}{{\Bbb Z}}

\newcommand{\Q}{{\Bbb Q}}
\newcommand{\R}{{\Bbb R}}

\newcommand{\fG}{\wh{{\Bbb G}}}

\newcommand{\F}{{\Bbb F}}

\newcommand{\Af}{{\Bbb A}}
\newcommand{\fAf}{\wh{{\Bbb A}}}

\newcommand{\Spec}{{\mathrm{Spec}}\,}

\newcommand{\Spf}{{\mathrm{Spf}}\,}

\newcommand{\lra}{\longrightarrow}
\newcommand{\ra}{\rightarrow}
\newcommand{\hra}{\hookrightarrow}

\newcommand{\lla}{\longleftarrow}

\newcommand{\ti}[1]{\widetilde{#1}}

\newcommand{\ul}[1]{\underline{#1}}
\newcommand{\ol}[1]{\overline{#1}}
\newcommand{\os}{\overset}

\newcommand{\Hom}{{\mathrm{Hom}}}

\newcommand{\Ker}{{\mathrm{Ker}}}

\newcommand{\Rep}{{\mathrm{Rep}}}

\newcommand{\cE}{{\cal E}}
\newcommand{\cF}{{\cal F}}

\newcommand{\cO}{{\cal O}}

\newcommand{\cU}{{\cal U}}

\newcommand{\cX}{{\cal X}}

\newcommand{\fU}{{\frak U}}

\renewcommand{\sp}{{\mathrm{sp}}}

\newcommand{\wh}{\widehat}

\newcommand{\pa}{\partial}

\renewcommand{\1}{{\bold 1}}

\newcommand{\MIC}{{\mathrm{MIC}}}

\newcommand{\Isoc}{{\mathrm{Isoc}}}
\newcommand{\Isocd}{\Isoc^{\dagger}}
\newcommand{\FIsoc}{F\text{-}\Isoc}
\newcommand{\FIsocd}{F\text{-}\Isocd}
\newcommand{\FMIC}{F\text{-}\MIC}

\newcommand{\codim}{{\mathrm{codim}}}

\setlength{\oddsidemargin}{5mm}
\setlength{\evensidemargin}{5mm}
\setlength{\topmargin}{10mm}
\setlength{\headheight}{3mm}
\setlength{\headsep}{3mm}
\setlength{\textwidth}{150mm}
\setlength{\textheight}{220mm}

\begin{document}
\title{Purity for overconvergence}
\author{Atsushi Shiho
\footnote{
Graduate School of Mathematical Sciences, 
University of Tokyo, 3-8-1 Komaba, Meguro-ku, Tokyo 153-8914, JAPAN. 
E-mail address: shiho@ms.u-tokyo.ac.jp \, 
Mathematics Subject Classification (2000): 12H25, 14F35.}}
\date{}
\maketitle

\begin{abstract}
Let $X \hra \ol{X}$ be an open immersion of smooth varieties over a 
field of characteristic $p>0$ such that the complement is 
a simple normal crossing divisor and let $\ol{Z} \subseteq 
Z \subseteq \ol{X}$ be closed 
subschemes of codimension at least $2$. 
In this paper, we prove that the 
canonical restriction functor between the category of overconvergent 
$F$-isocrystals $\FIsocd(X,\ol{X}) \lra \FIsocd(X \setminus Z, 
\ol{X} \setminus \ol{Z})$ is an equivalence of categories. We also prove an 
application to the category of $p$-adic representations of the fundamental 
group of $X$, which is a 
higher-dimensional version of a result of Tsuzuki.  
\end{abstract}

\tableofcontents

\section*{Introduction}
Let $X$ be a regular scheme and let $Z \subseteq X$ be a closed subscheme 
of codimension at least $2$. Then, by the famous Zariski-Nagata purity, 
any locally constant constructible sheaf on $(X \setminus Z)_{\rm et}$ 
extends uniquely to a locally constant constructible sheaf on 
$X_{\rm et}$. As a $p$-adic analogue of this fact, Kedlaya proved 
in \cite[5.3.3]{kedlayaI}
the following result on the purity for overconvergent isocrystals: 
Let $k$ be a field of characteristic $p>0$, let $X \hra \ol{X}$ be an 
open immersion of $k$-varieties with $X$ smooth and let $Z \subseteq X$ 
be a closed subscheme of $X$ of codimension at least $2$ such that 
$X \setminus Z$ is dense in $\ol{X}$. Then the 
restriction functor of the categories of overconvergent isocrystals 
$$ \Isocd(X,\ol{X}) \lra \Isocd(X \setminus Z, \ol{X}) $$
is an equivalence of categories. \par 
In this paper, we prove a slight generalization of the result of Kedlaya in 
the case where $\ol{X}$ is smooth, $\ol{X} \setminus X$ is a simple 
normal crossing divisor and the overconvergent isocrystals are endowed 
with Frobenius structure. The precise statement is as follows: 
Let $X \hra \ol{X}$ be an open immersion of smooth varieties over a 
field of characteristic $p>0$ such that the complement is 
a simple normal crossing divisor and let $\ol{Z} \subseteq 
Z \subseteq \ol{X}$ be closed 
subschemes of codimension at least $2$. Then the 
canonical restriction functor between the category of overconvergent 
$F$-isocrystals 
$$\FIsocd(X,\ol{X}) \lra \FIsocd(X \setminus Z, 
\ol{X} \setminus \ol{Z})$$ 
is an equivalence of categories. In the case $\ol{Z} = \emptyset$, 
this is reduced to the above-mentioned result of Kedlaya. A new point in our 
result is that we have the purity also on the second factor 
of the pair 
$(X,\ol{X})$ (the overconvergent locus). 
In this sense, we can say our result as `the purity for 
overconvergence'. \par 
As an application, we consider the following. 
Assume that $k$ is perfect and that $X$ is connected. 
Then Crew proved the equivalence of categories 
$$G: \Rep_{K^{\sigma}}(\pi(X)) \os{=}{\lra} \FIsoc(X)^{\circ}$$ 
between the category $\Rep_{K^{\sigma}}(\pi(X))$ of 
finite-dimensional continuous representations of the fundamental 
group $\pi_1(X)$ of $X$ over $K^{\sigma}$ 
(where $K$ is a complete discrete valuation field with residue 
field $k$ endowed with a lifting $\sigma$ of Frobenius and $K^{\sigma}$ 
denotes the fixed field of $\sigma$) and the category 
$\FIsoc(X)^{\circ}$ of unit-root convergent $F$-isocrystals on $X$ over 
$K$. When $X$ is a curve, Tsuzuki proved in \cite{tsuzukicurve} that 
the subcategory $\FIsocd(X,\ol{X})^{\circ}$ of $\FIsoc(X)^{\circ}$
 consisting of unit-root overconvergent $F$-isocrystals on $(X,\ol{X})$ over 
$K$ corresponds (via $G$) to the subcategory of 
$\Rep_{K^{\sigma}}(\pi(X))$ consisting of `the representations with 
finite local monodromy'. A generalization of this result to 
higher-dimensional case is proved by Kedlaya in 
\cite[2.3.7, 2.3.9]{kedlayaswanII} 
based on a result of Tsuzuki in \cite{tsuzuki}. One 
drawback in Kedlaya's result 
is that the number of valuations which we should look at is infinite.  
In this paper, we give an alternative definition of `the representations with 
finite local monodromy' which looks at only finitely many 
discrete valuations of $k(X)$ and prove that the category of such 
representations is 
still equivalent (via $G$) to $\FIsoc(X,\ol{X})^{\circ}$, 
by using the purity for overconvergence mentioned in the previous 
paragraph. \par 
The content of each section is as follows: In the first section, 
we give several notations, conventions and terminologies 
which we often use in this paper. In the second section, 
we prove a theorem on the relation of generic convergence and 
overconvergence of $p$-adic differential equations in certain case, 
which we need to prove the purity for overconvergence. 
In the third section, we give a proof of the purity for overconvegergence. 
In the fourth section, we explain the above-mentioned 
application to $p$-adic representations. \par 
The author is grateful to Nobuo Tsuzuki for giving me an opportunity to 
give a talk on the topic of this paper at Tohoku University. 
The author is partly supported by Grant-in-Aid for Young Scientists (B) 
21740003 from 
the Ministry of Education, Culture, Sports, Science and Technology, Japan 
and 
Grant-in-Aid for Scientific Research (B) 22340001 
(representative: Nobuo Tsuzuki) from 
Japan Society for the Promotion of Science. 

\section{Preliminaries}
In this section, we give several notations, conventions and terminologies 
which we often use in this paper. \par 
Throughout this paper, $K$ is 
a complete discrete valuation field of mixed characteristic 
$(0,p)$ with ring of integers $O_K$ and residue field $k$. 
Let $q$ be a fixed power of $p$ and assume that $k$ contains $\F_q$. 
Moreover, let $\sigma: O_K \lra O_K$ be an endomorphism of $O_K$ 
which lifts the $q$-th power map on $k$. We denote the endomorphism 
on $K$ induced by $\sigma$ by the same symbol. 
Let $|\cdot|: K \lra \R_{\geq 0}$ be a fixed valuation of $K$ and 
let $\Gamma^*$ be $\sqrt{|K^{\times}|} \cup \{0\}$. \par 
All the schemes appearing in this paper are assumed to be 
separated of finite type 
over $k$ and all the $p$-adic formal schemes appearing in this paper are
 assumed to be separated, topologically of finite type over $\Spf O_K$. 
For a $p$-adic formal scheme $\cX$, we call the scheme $\cX \otimes_{O_K} k$ 
the special fiber of $\cX$. For a smooth scheme $X$ over $k$, a lift of 
$X$ is a closed immersion $X \hra \cX$ into a smooth $p$-adic formal 
scheme $\cX$ such that $X$ is naturally isomorphic to the special fiber 
of $\cX$. (Note that, when $X$ is affine and smooth over $k$, 
a lift of $X$ always exists.) For a $p$-adic formal scheme $\cX$, 
we denote the associated rigid space over $K$ by $\cX_K$. Then we have the 
specialization map $\sp: \cX_K \lra \cX$. \par 
A pair $(X,\ol{X})$ is a pair of schemes $X,\ol{X}$ (separated of finite 
type over $k$) endowed with an open immersion $X \hra \ol{X}$ over $k$. 
A smooth pair is a pair $(X,\ol{X})$ such that $X, \ol{X}$ are smooth and 
that $\ol{X} \setminus X$ (endowed with the reduced closed subscheme 
structure) is a simple normal crossing 
divisor in $\ol{X}$. 
A formal pair $(\cX,\ol{\cX})$ is a pair of $p$-adic formal 
schemes $\cX,\ol{\cX}$ (separated, topologically of finite 
type over $\Spf O_K$) endowed with an open immersion $\cX \hra \ol{\cX}$ over 
$\Spf O_K$. 
A formal smooth pair is a 
pair $(\cX,\ol{\cX})$ such that $\cX, \ol{\cX}$ are smooth and 
that $\ol{\cX} \setminus \cX$ (with some closed formal subscheme structure) 
is a relative simple normal crossing 
divisor in $\ol{\cX}$. A morphism of (formal) pairs $f:(X,\ol{X}) \lra 
(Y,\ol{Y})$ is defined as the morphism $f:\ol{X} \lra \ol{Y}$ satisfying 
$f(X) \subseteq Y$. It is called strict if $f^{-1}(Y) = X$ and it is called 
finite etale or a closed immersion if so is $f:\ol{X} \lra \ol{Y}$. \par 
For a formal pair $(\cX, \ol{\cX})$, we call the pair 
$(\cX \otimes_{O_K} k, \ol{\cX} \otimes_{O_K} k)$ 
the special fiber of $(\cX, \ol{\cX})$. 
For a smooth pair $(X, \ol{X})$, a lift of 
$(X, \ol{X})$ is a strict closed immersion 
$(X, \ol{X}) \hra (\cX, \ol{\cX})$ into a formal smooth pair 
such that $(X, \ol{X})$ is naturally isomorphic to the special fiber 
of $(\cX, \ol{\cX})$. \par 
Let $(\cX,\ol{\cX})$ be a formal smooth pair.  
Then a strict neighborhood 
of $\cX_K$ in $\ol{\cX}_K$ is an admissible open set $V$ of $\ol{\cX}_K$ 
such that $\{V, \sp^{-1}(\ol{\cX} \setminus \cX)\}$ forms an admissible 
covering of $\ol{\cX}_K$. For strict neighborhoods $V, W$ 
of $\cX_K$ in $\ol{\cX}_K$ with $W \subseteq V$, we denote the 
canonical open immersion $W \hra V$ by $\alpha_{WV}$ and using this, 
we define the sheaf of overconvergent sections $j^{\dagger}\cO_{\ol{\cX}_K}$ 
by $j^{\dagger}\cO_{\ol{\cX}_K} := 
\varinjlim_{V} \alpha_{V\ol{\cX}_K,*}\cO_V$. For a strict neighborhood $V$ 
of $\cX_K$ in $\ol{\cX}_K$ and an $\cO_V$-module $E$, we put 
$j_V^{\dagger}E := \varinjlim_{W \subseteq V} 
\alpha_{W\ol{\cX}_K,*}\alpha^{-1}_{WV}E$. 
Let $\MIC(\cX_K,\ol{\cX}_K)$ be the category of 
pairs $(V,(E,\nabla))$ consisting of a strict neighborhood 
$V$ of $\cX_K$ in $\ol{\cX}_K$ and a $\nabla$-module
($=$locally free module of finite rank endowed with an 
integrable connection) $(E,\nabla)$ on $V$ over $K$, whose set of 
morphisms is defined by $\Hom((V,(E,\nabla)),(V',(E',\nabla'))) 
:= \varinjlim_{V''}\Hom((E,\nabla)|_{V''},(E',\nabla')|_{V''})$, 
where $V''$ runs through strict neighborhoods of $\cX_K$ in 
$\ol{\cX}_K$ contained in $V\cap V'$. We 
call an object in $\MIC(\cX_K,\ol{\cX}_K)$ a $\nabla$-module on 
a strict neighborhood of $\cX_K$ in $\ol{\cX}_K$ by abuse of 
terminology, and we will often denote it simply by $(E,\nabla)$ in the 
following. On the other hand, let $\MIC(j^{\dagger}\cO_{\ol{\cX}_K})$ be
 the category of locally free $j^{\dagger}\cO_{\ol{\cX}_K}$-modules $E$
of finite rank endowed with an integrable connection of the form 
$\nabla: E \lra E \otimes j^{\dagger}_{\ol{\cX}_K}\Omega^1_{\ol{\cX}_K}$. 
Then we have the equivalence of categories 
\cite[2.1.10, 2.2.3]{berthelotrig}
$$ \MIC(\cX_K,\ol{\cX}_K) \os{=}{\lra} \MIC(j^{\dagger}\cO_{\ol{\cX}_K}); 
\quad (V, (E,\nabla)) \mapsto j_V^{\dagger}(E, \nabla). $$
When $\cX_K = \ol{\cX}_K$, we denote the category 
$\MIC(\cX_K,\ol{\cX}_K)$ simply by $\MIC(\cX_K)$. \par 
On the other hand, when we are given a pair $(X,\ol{X})$, 
there exists a notion of overconvergent isocrystals on $(X,\ol{X})$ over $K$: 
Let us denote the category of overconvergent 
isocrystals on $(X,\ol{X})$ over $K$ by $\Isocd(X,\ol{X})$. When 
$X=\ol{X}$, we call this category the category of convergent isocrystals 
on $X$ over $K$ and denote it by $\Isoc(X)$. \par 
Suppose that 
$(X,\ol{X})$ is a smooth pair which admits a lift $(X, \ol{X}) \hra 
(\cX, \ol{\cX})$. 
Then we have the 
canonical fully faithful functor (called the functor of realization)
$$ \Phi_{(\cX,\ol{\cX})}: \Isocd(X,\ol{X}) \lra \MIC(\cX_K,\ol{\cX}_K) = 
\MIC(j^{\dagger}\cO_{\ol{\cX}_K}). $$
An object in $\MIC(\cX_K,\ol{\cX}_K) = 
\MIC(j^{\dagger}\cO_{\ol{\cX}_K})$ is called overconvergent if 
it is in the essential image of $\Phi_{(\cX,\ol{\cX})}$. 
When $X=\ol{X}$ (hence $\cX = \ol{\cX}$), we denote the functor 
$\Phi_{(\cX,\ol{\cX})}$ by 
$$ \Phi_{\cX}: \Isoc(X) \lra \MIC(\cX_K), $$
and an object in $\MIC(\cX_K)$ is called convergent if 
it is in the essential image of $\Phi_{\cX}$. \par 
For a pair $(X,\ol{X})$, the $q$-th power Frobenius map 
$F: (X,\ol{X}) \lra (X,\ol{X})$ is a morphism over $\sigma^*: \Spf O_K \lra 
\Spf O_K$ and so it induces a $\sigma$-linear functor 
$F^*: \Isocd(X,\ol{X}) \lra \Isocd(X,\ol{X})$. 
An overconvergent $F$-isocrystal on $(X,\ol{X})$ over $K$ is a pair 
$(\cE,\Psi)$, where $\cE \in \Isoc(X,\ol{X})$ and $\Psi$ is an 
isomorphism $F^*\cE \os{=}{\lra} \cE$. We denote the category of 
overconvergent $F$-isocrystals on $(X,\ol{X})$ over $K$ 
by $\FIsocd(X,\ol{X})$. 
When $X=\ol{X}$, we call this category the category of 
convergent $F$-isocrystals on $X$ over $K$ 
and denote it by $\FIsoc(X)$. \par 
Suppose that $(X,\ol{X})$ is a smooth pair which admits 
a lift $(X, \ol{X}) \hra (\cX, \ol{\cX})$ 
and an endomorphism $F: \ol{\cX} \lra \ol{\cX}$ over $\sigma^*$ 
lifting the $q$-th power Frobenius on $\ol{X}$. 
Then $F$ induces an endomorphism on the formal smooth pair 
$(\cX,\ol{\cX})$ and an 
endomorphism on $\cX_K$ and on $\ol{\cX}_K$, which we denote also by $F$. 
Let 
$\FMIC(\cX_K,\ol{\cX}_K)$ be the category of pairs 
$((V,(E,\nabla)),\Psi)$, where 
$(V,(E,\nabla)) \in \MIC(\cX_K,\ol{\cX}_K)$ 
and $\Psi$ is an isomorphism 
$(F^{-1}(V), (F|_{F^{-1}(V)})^*(E,\nabla)) \os{=}{\lra} (V,(E,\nabla))$ in 
$\MIC(\cX_K,\ol{\cX}_K)$. Also, $F$ induces the homomorphism 
$F^*j^{\dagger}\cO_{\ol{\cX}_K} \lra j^{\dagger}\cO_{\ol{\cX}_K}$ 
and so we can define the category 
$\FMIC(j^{\dagger}\cO_{\ol{\cX}_K})$ as the category of pairs 
$((E,\nabla),\Psi)$ consisting of $(E,\nabla) \in 
\MIC(j^{\dagger}\cO_{\ol{\cX}_K})$ and an isomorphism 
$\Psi: F^*(E,\nabla) \os{=}{\lra} (E,\nabla)$. Then we have an 
equivalence of categories 
$$ \FMIC(\cX_K,\ol{\cX}_K) \os{=}{\lra} 
\FMIC(j^{\dagger}\cO_{\ol{\cX}_K}); 
\quad ((V, (E,\nabla)),\Psi) \mapsto 
(j_V^{\dagger}(E, \nabla),j_{V''}^{\dagger}\Psi) $$
(where $V''$ is a strict neighborhood on which $\Psi$ is defined), and 
$\Phi_{(\cX,\ol{\cX})}$ induces the fully faithful functor 
$$ \Phi_{(\cX,\ol{\cX})}: \FIsocd(X,\ol{X}) \lra 
\FMIC(\cX_K,\ol{\cX}_K) = 
\FMIC(j^{\dagger}\cO_{\ol{\cX}_K}). $$
When $X=\ol{X}$ (hence $\cX = \ol{\cX}$), we denote it by 
$$ \Phi_{\cX}: \FIsoc(X) \lra \FMIC(\cX_K). $$
Finally, we prepare a notation to express $p$-adic polyannulus: 
For a closed interval $[a,b]$ with $a,b \in [0,1] \cap \Gamma^*$, 
we define the 
polyannulus $A^n_K[a,b]$ by 
$$ A^n_K[a,b] := \{x \in (\fAf^{n}_{O_K})_{K} \,|\, \forall i, 
|t_i(x)| \in [a,b] \} $$
(where $t_i$ is the $i$-th coordinate of $(\fAf^{n}_{O_K})_{K}$). 

\begin{remark}
We explained several notions only in the restricted cases: 
For example, the functor of realization can be defined for more 
general situation. 
For details on overconveregent isocrystals and $\nabla$-modules, 
see \cite{berthelotrig} and \cite[\S 2]{kedlayaI}, for example. 
\end{remark}

\section{Generic convergence and overconvergence}

In this section, we prove the following theorem, 
which claims that `generic convergence implies overconvergence' in 
certain situation. 

\begin{theorem}\label{coc}
Let $(\cX,\ol{\cX})$ be a formal smooth pair. Let $\cU$ be an open dense 
$p$-adic formal subscheme of $\cX$, denote the canonical morphism 
of formal smooth pairs $(\cU,\cU) \hra (\cX,\ol{\cX})$ by $\alpha_{\cU}$ and 
denote the induced functor 
$\MIC(\cX_K,\ol{\cX}_K) \lra \MIC(\cU_K)$ by $\alpha_{\cU}^*$. 
Then an object $(E,\nabla)$ in $\MIC(\cX_K,\ol{\cX}_K)$ is 
overconvergent if and only if $\alpha_{\cU}^*(E,\nabla)$ is 
convergent. 
\end{theorem}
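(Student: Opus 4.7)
The forward direction is formal. The composition $\cU \hra \cX \hra \ol{\cX}$ defines a strict morphism of formal smooth pairs $\alpha_{\cU}:(\cU,\cU) \to (\cX,\ol{\cX})$, and the realization functor is compatible with pullback along strict morphisms; hence if $(E,\nabla)$ lies in the essential image of $\Phi_{(\cX,\ol{\cX})}$, say $(E,\nabla)=\Phi_{(\cX,\ol{\cX})}(\cE)$, then $\alpha_{\cU}^*(E,\nabla)=\Phi_{\cU}(\alpha_{\cU}^*\cE)$ with $\alpha_{\cU}^*\cE\in\Isocd(U,U)=\Isoc(U)$, so $\alpha_{\cU}^*(E,\nabla)$ is convergent.

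For the converse, the problem is local on $\ol{\cX}$ (overconvergence can be tested on a Zariski open cover of $\ol{\cX}$), so one may assume that $\ol{\cX}=\Spf A$ admits étale coordinates $t_1,\ldots,t_n$ with $\ol{\cX}\setminus\cX=V(t_1\cdots t_r)$. After a standard pullback to the polydisc, one identifies $\ol{\cX}_K$ with $A^n_K[0,1]$, $\cX_K$ with $\{|t_1|=\cdots=|t_r|=1\}$, and strict neighborhoods with the wide opens $V_\eta := \{|t_i| \geq \eta,\ 1 \leq i \leq r\}$ for $\eta \in \Gamma^* \cap (0,1)$. In this local model, overconvergence of $(E,\nabla)$ becomes, via the Taylor isomorphism $\tau(s)=\sum_I (\nabla_\partial^I s / I!)(t'-t)^I$, equivalent to the assertion that for some $\eta<1$ the intrinsic generic radii of convergence $R_1,\ldots,R_r$ of $(E,\nabla)$ with respect to $t_1,\ldots,t_r$ are identically $1$ on $V_\eta$. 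On the other hand, convergence of $\alpha_{\cU}^*(E,\nabla)$ provides the equalities $R_i=1$ at the generic point of $\cU_K$, which coincides with the generic point of $\cX$ since $\cU$ is dense in $\cX$.

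The decisive step, which is also the principal obstacle, is the propagation of ``$R_i=1$ at the generic point'' to ``$R_i=1$ uniformly on $V_\eta$ for some $\eta<1$''. For this I would appeal to the continuity and concavity properties of the generic radius of convergence of a $\nabla$-module on a polyannulus, in the spirit of \cite[\S 2]{kedlayaI}: the function $\log R_i$ is continuous and concave in the parameters $\log|t_1|,\ldots,\log|t_r|$, is a priori bounded above by $0$, and equals $0$ at the vertex $|t_1|=\cdots=|t_r|=1$; concavity then forces it to vanish identically on a suitable neighborhood of that vertex, i.e., on some $V_\eta$. Carrying this out for each $i=1,\ldots,r$ and reassembling in the étale-local coordinates yields the claimed overconvergence. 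Everything else in the argument is a routine local reduction; the genuine analytic content lies in this radius-propagation step, which has no cheaper replacement by a naive maximum-modulus principle.
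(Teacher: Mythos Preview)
Your forward direction matches the paper's. The converse, however, has a genuine gap at exactly the step you flag as ``the principal obstacle''.

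The concavity argument does not do what you claim. Suppose $g(\rho)=\log R_i(E,\rho)$ is concave in $\log\rho$ on the box $[\log\lambda,0]^r$, satisfies $g\le 0$ everywhere, and $g(\1)=0$. The vertex $\1$ is a \emph{boundary} corner of the domain, and a concave function attaining its maximum at a boundary point of a convex body need not be constant near that point: already the linear function $g(x_1,\dots,x_r)=x_1+\cdots+x_r$ on $[-1,0]^r$ is concave, nonpositive, vanishes at the origin, and is strictly negative everywhere else. So ``$g(\1)=0$ plus concavity'' cannot force $g\equiv 0$ on any $V_\eta$ with $\eta<1$. Your propagation step therefore fails as stated, and there is no cheap repair via maximum principles; the vertex is simply not interior to the domain of concavity.

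The paper's proof avoids this entirely by observing that no propagation is needed: after reducing to the standard pair $(\fG_{m,O_K}^n\times\fAf_{O_K}^m,\fAf_{O_K}^{n+m})$, the result \cite[2.7]{co} says that overconvergence is equivalent to the \emph{single} condition $IR(E,\1)=1$, not to $IR=1$ on a whole $V_\eta$. A direct computation with the spectral norms $|\partial_i|_{E,\1,\sp}$ shows that convergence on $\cX_{0,K}$ is equivalent to the same condition $IR(E,\1)=1$. The two sides are thus literally the same criterion at the single Gauss point $\1$, and the theorem follows with no radius-variation argument at all.

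Two smaller points you should also revisit. First, ``\'etale coordinates $t_1,\dots,t_n$'' on $\ol{\cX}$ do not let you \emph{identify} $\ol{\cX}_K$ with $A_K^n[0,1]$; they only give a finite \'etale map to $\fAf^n_{O_K}$. The paper handles this by a pushforward/direct-summand argument along that finite \'etale map (Step~3), reducing to the honest polyannulus model. Second, you pass from convergence on $\cU_K$ to a statement ``at the generic point of $\cX$'' by saying the generic points coincide; this conflates the schematic generic point with the rigid-analytic Gauss point. The paper instead first reduces to the case $\cU=\cX$ using Ogus's result \cite[2.16]{ogus} that convergence on a dense open implies convergence on all of $\cX$, and only then compares with overconvergence.
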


\begin{proof}
The proof is divided into several steps. \par 
\ul{Step 1}: First we prove the `only if' part.
Let $(X,\ol{X})$ be the special fiber of $(\cX,\ol{\cX})$ and let 
$U$ be the special fiber of $\cU$. Then the `only if' part 
follows from the following 
commutative 
diagram, which is the functoriality of the functors of the form 
$\Phi_{(?,?)}$: 
\begin{equation*}
\begin{CD}
\Isocd(X,\ol{X}) @>{\Phi_{(\cX,\ol{\cX})}}>> \MIC(\cX_K,\ol{\cX}_K) \\ 
@V{\alpha_{\cU}^*}VV @V{\alpha_{\cU}^*}VV \\ 
\Isoc(U) @>{\Phi_{\cU}}>> \MIC(\cU_K). 
\end{CD}
\end{equation*}
(Here the functor $\alpha_{\cU}^*$ on the right is the pull-back 
functor induced by the morphisms $\cU_K \hra \cX_K, \cU_K \hra \ol{\cX}_K$ 
induced by $\alpha_{\cU}$.) \par 
\ul{Step 2}: Here we reduce the proof of the theorem to the case 
$\cU = \cX$. Let us factorize $\alpha_{\cU}$ as 
$$ (\cU,\cU) \lra
(\cX,\cX) \os{\alpha_{\cX}}{\lra} (\cX,\ol{\cX}). $$
Then $\alpha_{\cX}^*(E,\nabla)$ is an object in $\MIC(\cX_K/K)$ whose 
restriction to $\MIC(\cU_K/K)$ is convergent. So, by \cite[2.16]{ogus}, 
$\alpha_{\cX}^*(E,\nabla)$ is convergent. So, if we assume that the 
theorem is true in the case $\cU = \cX$, we can conclude that 
$(E,\nabla)$ is overconvergent. In the following, we will assume that 
$\cU = \cX$. \par 
\ul{Step 3}: In this step, we follow the argument in \cite[2.4]{co} 
and prove that the theorem is true if it is true for 
formal smooth pairs of the form 
$(\cX_0, \ol{\cX}_0) := (\fG_{m,O_K}^n \times \fAf_{O_K}^m, 
\fAf_{O_K}^{n+m})\,(n,m\in\N)$ (and $\cU = \cX_0$). 
First, by claim in the proof of \cite[Proposition 2.4]{co}, 
we have the following: 
For any closed point $x$ of $\ol{X}$, there exist 
open immersions $\ol{\cU}_x \hra \ol{\cX}_x \hra \ol{\cX}$ with 
$x \in \ol{\cU}_x$, 
an open formal subscheme $\cX_x$ of $\ol{\cX}_x$ and a diagram 
of formal smooth pairs 
$$ (\cX,\ol{\cX}) \os{j}{\lla} (\cX_x, \ol{\cX}_x) \os{f}{\lra} 
(\fG_{m,O_K}^n \times \Af_{O_K}^m, \fAf_{O_K}^{n+m}) $$ 
(where $j$ is induced by the open immersion $\ol{\cX}_x \hra 
\ol{\cX}$) for some $n,m$ 
such that $f$ is a strict finite etale morphism and that 
the morphism $(\ol{\cU}_x \cap \cX_x, \ol{\cU}_x) \lra (\ol{\cU}_x \cap \cX, 
\ol{\cU}_x)$ induced by $j$ is an isomorphism. \par 
For any closed point 
$x$ in $\ol{X}$, let us choose 
open immersions $\ol{\cU}_x \hra \ol{\cX}_x \hra \ol{\cX}$ with $x \in 
\ol{\cU}_x$, 
an open formal subscheme $\cX_x$ of $\ol{\cX}_x$ and morphisms $j,f$ as 
above. Then, if 
$(E,\nabla)$ is an object in $\MIC(\cX_K,\ol{\cX}_K)$ such that 
$\alpha_{\cX}^*(E,\nabla)$ is 
convergent, the restriction of it to $\MIC(\cX_{x,K})$ is convergent 
for any $x$. Then, if we succeed to prove that the restriction 
of $(E,\nabla)$ to 
$\MIC(\cX_{x,K},\ol{\cX}_{x,K})$ 
is overconvergent, 
it implies the overconvergence of the restriction of 
$(E,\nabla)$ to 
$\MIC(\ol{\cU}_{x,K} \cap \cX_{x,K},\ol{\cU}_{x,K}) = 
\MIC(\ol{\cU}_{x,K} \cap \cX_{K},\ol{\cU}_{x,K})$ 
for all $x$ and this implies the overconvergence of $(E,\nabla)$ because 
it is known \cite[2.3]{co} 
that the overconvergence condition for an object in 
$\MIC(\cX_K,\ol{\cX}_K)$ has local nature with respect to the Zariski 
topology on $\ol{\cX}$. So, it suffices to prove the theorem for 
$(\cX_x, \ol{\cX}_x)$, that is, we may assume that there exists a 
strict finite etale morphism $f: (\cX, \ol{\cX}) \lra 
(\fG_{m,O_K}^n \times \fAf_{O_K}^m, \fAf_{O_K}^{n+m}) =: 
(\cX_0, \ol{\cX}_0) $ to prove the proposition. \par 
Let $(X_0,\ol{X}_0)$ be the special fiber of $(\cX_0,\ol{\cX}_0)$. Then, by 
the argument in the proof of \cite[2.4]{co} and \cite[5.1]{tsuzuki}, 
we have the push-forward functors 
\begin{align*}
& f_*: \Isocd(X,\ol{X}) \lra \Isocd(X_0,\ol{X}_0), \\ 
& f_*: \MIC(\cX_K,\ol{\cX}_K) \lra \MIC(\cX_{0,K},\ol{\cX}_{0,K})
\end{align*}
which makes the diagram 
\begin{equation}\label{push1}
\begin{CD}
\Isocd(X,\ol{X}) @>{\Phi_{(\cX,\ol{\cX})}}>> 
\MIC(\cX_K,\ol{\cX}_K) \\ 
@V{f_*}VV @V{f_*}VV \\ 
\Isocd(X_0,\ol{X}_0) 
@>{\Phi_{(\cX_0,\ol{\cX}_0)}}>> 
\MIC(\cX_{0,K},\ol{\cX}_{0,K}) \\ 
\end{CD}
\end{equation}
commutative. Moreover, any object $(E,\nabla)$ in 
$\MIC(\cX_K,\ol{\cX}_K)$ is a direct summand of $f^*f_*(E,\nabla)$. 
Note that we have similar functors also in the convergent case 
(because the convergent case is a special case of overconvergent case): 
We have the the push-forward functors 
\begin{align*}
& f_*: \Isoc(X) \lra \Isoc(X_0), \\ 
& f_*: \MIC(\cX_K) \lra \MIC(\cX_{0,K})
\end{align*}
which makes the diagram 
\begin{equation}\label{push2}
\begin{CD}
\Isoc(X) @>{\Phi_{\cX}}>> 
\MIC(\cX_K) \\ 
@V{f_*}VV @V{f_*}VV \\ 
\Isoc(X_0) 
@>{\Phi_{\cX_0}}>> 
\MIC(\cX_{0,K}) \\ 
\end{CD}
\end{equation}
commutative. Moreover, the diagrams \eqref{push1}, \eqref{push2} are 
compatible via restriction functors $\alpha_{\cX_0}^*$ because of the 
functoriality of $\Phi_{(?,?)}$ and $f_*$. \par 
Now let us assume that the theorem is true for $(\cX_0,\ol{\cX}_0)$ 
(and $\cU = \cX_0$) and let us take an object $(E,\nabla)$ in 
$\MIC(\cX_K,\ol{\cX}_K)$ such that $\alpha_{\cX}^*(E,\nabla)$ is 
convergent. Then $f_*\alpha_{\cX}^*(E,\nabla) = \alpha_{\cX_0}^*f_*(E,\nabla)$ 
is convergent by the diagram \eqref{push2} and since we have assumed the 
theorem  for $(\cX_0,\ol{\cX}_0)$, this implies the overconvergence of 
$f_*(E,\nabla)$. Then $f^*f_*(E,\nabla)$ is overconvergent and so 
$(E,\nabla)$ is also overconvergent because it is a direct summand. 
(See the characterization of overconvergence in \cite[2.1]{co}.) 
Therefore we have shown that it suffices to prove the theorem for 
for $(\cX_0,\ol{\cX}_0)$ 
(and $\cU = \cX_0$). \par 
\ul{Step 4}: Here we prove the theorem for $(\cX_0,\ol{\cX}_0)$ above, by 
using \cite[2.7]{co}. Let $(E,\nabla)$ be an object in 
$\MIC(\cX_{0,K},\ol{\cX}_{0,K})$. In this case, $(E,\nabla)$ is defined on 
$A^n_K[\lambda,1] \times A^m_K[0,1]$ for some $\lambda \in [0,1) 
\cap \Gamma^*$. So we have the notion of the intrinsic generic radius 
of convergence $IR(E,\rho)$ for $\rho \in [\lambda,1]^n \times [0,1]^m$
which is defined by Kedlaya-Xiao \cite{kedlayaxiao}. By \cite[2.7]{co}, 
$(E,\nabla)$ is overconvergent if and only if 
$IR(E,\1) = 1$, where $\1 := (1,...,1)$. On the other hand, 
by applying \cite[2.1]{co} 
(see also \cite[2.5.6--8]{kedlayaI}, \cite[2.2.13]{berthelotrig}) to 
$(\cX_{0,K},\cX_{0,K})$, we see the following: 
When we fix a set of generators $(e_{\alpha})_{\alpha}$ of 
$\Gamma(\cX_{0,K},E)$, 
$\alpha_{\cX_0}^*(E,\nabla)$ is convergent if and only if, 
for each $\eta \in (0,1)\cap\Gamma^*$ and any $\alpha$, 
the multi-sequence 
$$\left\{ \left\| \dfrac{1}{i_1!\cdots i_{n+m}!}\pa^{i_1}_1\cdots
\pa^{i_{n+m}}_{n+m} 
(e_{\alpha}) \right\| \eta^{i_1+\cdots +i_{n+m}} \right\}_{i_1,...,i_{n+m}}$$ 
tends to zero as $i_1,...,i_{n+m} \to\infty$, where 
$\pa_j := \dfrac{\pa}{\pa t_j}$ and 
$\| \cdot \|$ denotes 
any $p$-adic Banach norm on $\Gamma(\cX_{0,K},E)$ induced by the 
affinoid norm on $\Gamma(\cX_{0,K},\cO)$. 
Then, by exactly the same argument as the proof of \cite[2.7]{co}, we
 see that the above condition is equivalent to the condition that 
$IR(E,\1) = \min_i\{p^{-1/(p-1)}|\pa_i|^{-1}_{E,\1,\sp}\} > \eta$ 
for each $\eta \in (0,1)\cap\Gamma^*$ (where $|\pa_i|_{E,\1,\sp}$ is 
the spectral norm of $\pa_i$ on $E$ `at the generic point of radius $\1$'), 
and it is nothing but the condition $IR(E,\1)=1$. Hence 
$(E,\nabla)$ is overconvergent if and only if 
$\alpha_{\cX_0}^*(E,\nabla)$ is convergent and so we are done. 
\end{proof}

We have the following corollary for convergent $F$-isocrystals. 

\begin{corollary}\label{cor}
Let $(X,\ol{X})$ be a smooth pair, let $U$ be an dense open subscheme of $X$ 
and let $\cE$ be an object in $\FIsoc(U)$ satisfying the following 
condition $(*)$: \\ 
\quad \\
$(*)$: \,\,\, There exists a Zariski open covering 
$\ol{X} = \bigcup_{\alpha\in\Delta}\ol{X}_{\alpha}$, 
lifts $(X \cap \ol{X}_{\alpha}, \ol{X}_{\alpha} \hra (\cX_{\alpha}, 
\ol{\cX}_{\alpha})$ and endomorphisms 
$F: \ol{\cX}_{\alpha} \lra 
\ol{\cX}_{\alpha}$ lifting the $q$-th power Frobenius on $\ol{X}_{\alpha}$ 
$($for $\alpha \in \Delta)$ satisfying the following$:$ If we denote the 
open formal subscheme of $\ol{\cX}_{\alpha}$ with special fiber 
$U \cap \ol{X}_{\alpha}$ by 
$\cU_{\alpha}$, the image of $\cE$ by the 
composite 
\begin{equation}\label{func1}
\FIsoc(U) \lra \FIsoc(U\cap \ol{X}_{\alpha}) 
\os{\Phi_{\cU_{\alpha}}}{\lra} \FMIC(\cU_{\alpha})
\end{equation}
is in the essential image of the functor 
\begin{equation}\label{func2}
\FMIC(\cX_{\alpha,K},\ol{\cX}_{\alpha,K}) \lra 
\FMIC(\cU_{\alpha}).
\end{equation}
\quad \\
Then $\cE$ is in the essential image of the functor 
$\FIsocd(X,\ol{X}) \lra \FIsoc(U).$
\end{corollary}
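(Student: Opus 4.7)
The plan is to apply Theorem~\ref{coc} locally on each member of the Zariski covering $\ol{X} = \bigcup_{\alpha \in \Delta} \ol{X}_\alpha$ to produce, for each $\alpha$, an overconvergent $F$-isocrystal on $(X \cap \ol{X}_\alpha, \ol{X}_\alpha)$ extending $\cE|_{U \cap \ol{X}_\alpha}$, and then to glue these via Zariski descent on $\ol{X}$.

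For each $\alpha \in \Delta$, condition $(*)$ supplies an object $\wh{\cE}_\alpha \in \FMIC(\cX_{\alpha,K}, \ol{\cX}_{\alpha,K})$ whose image in $\FMIC(\cU_\alpha)$ equals $\Phi_{\cU_\alpha}(\cE|_{U \cap \ol{X}_\alpha})$, hence lies in the essential image of $\Phi_{\cU_\alpha}$ and is in particular convergent. Since $U$ is dense in $X$, the open formal subscheme $\cU_\alpha$ is dense in $\cX_\alpha$, so Theorem~\ref{coc}---whose proof carries over verbatim to the $F$-enriched categories since the Frobenius structure is only an extra isomorphism compatible with every functor used in that proof---applies and shows that $\wh{\cE}_\alpha$ is overconvergent on $(\cX_\alpha, \ol{\cX}_\alpha)$. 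By the full faithfulness of $\Phi_{(\cX_\alpha, \ol{\cX}_\alpha)}$, there is a unique $\cE_\alpha \in \FIsocd(X \cap \ol{X}_\alpha, \ol{X}_\alpha)$ with realization $\wh{\cE}_\alpha$, and by the full faithfulness of $\Phi_{\cU_\alpha}$ we obtain a canonical identification $\cE_\alpha|_{U \cap \ol{X}_\alpha} = \cE|_{U \cap \ol{X}_\alpha}$ in $\FIsoc(U \cap \ol{X}_\alpha)$.

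To glue the family $\{\cE_\alpha\}_\alpha$ to a global object, I would construct, for each pair $(\alpha, \beta)$, a canonical isomorphism $\phi_{\alpha\beta} \colon \cE_\alpha|_{\ol{X}_{\alpha\beta}} \os{=}{\lra} \cE_\beta|_{\ol{X}_{\alpha\beta}}$ over $\ol{X}_{\alpha\beta} := \ol{X}_\alpha \cap \ol{X}_\beta$ by invoking Kedlaya's full faithfulness theorem for overconvergent $F$-isocrystals: the restriction functor $\FIsocd(X \cap \ol{X}_{\alpha\beta}, \ol{X}_{\alpha\beta}) \lra \FIsoc(U \cap \ol{X}_{\alpha\beta})$ is fully faithful, and by the previous paragraph both $\cE_\alpha|_{\ol{X}_{\alpha\beta}}$ and $\cE_\beta|_{\ol{X}_{\alpha\beta}}$ have image $\cE|_{U \cap \ol{X}_{\alpha\beta}}$ under it. The cocycle condition on triple intersections is then automatic by the same full faithfulness, and Zariski descent for $\FIsocd$ on $\ol{X}$ yields a global object $\cE^\dagger \in \FIsocd(X, \ol{X})$ whose restriction to each $\ol{X}_\alpha$ is $\cE_\alpha$. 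Finally, $\cE^\dagger|_U = \cE$ in $\FIsoc(U)$ follows from the local identifications together with Zariski descent for $\FIsoc$ on $X$ applied to the covering $U = \bigcup_\alpha (U \cap \ol{X}_\alpha)$.

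The main obstacle is the gluing step: constructing the $\phi_{\alpha\beta}$ and verifying their cocycle identity without ever choosing lifts over the overlaps $\ol{X}_{\alpha\beta}$ hinges on a fully faithful restriction functor from $\FIsocd$ to $\FIsoc$ over a dense open, which is exactly Kedlaya's theorem. The initial extraction of the $\cE_\alpha$ from $(*)$ is mechanical once Theorem~\ref{coc} is available in the $F$-equivariant form, the only other subtlety being the density of $\cU_\alpha$ in $\cX_\alpha$, which follows from the density of $U$ in $X$.
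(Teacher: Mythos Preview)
Your proposal is correct and follows essentially the same route as the paper: apply Theorem~\ref{coc} on each $\ol{X}_\alpha$ to obtain local overconvergent $F$-isocrystals, then glue using the full faithfulness of the restriction $\FIsocd \to \FIsoc$ over a dense open (the paper cites \cite[4.1.1]{tsuzuki} and \cite[4.2.1]{kedlayaII} for this step). The only cosmetic difference is that the paper applies Theorem~\ref{coc} directly to the underlying $\nabla$-module and then lifts the Frobenius isomorphism afterward via the full faithfulness of $\Phi_{(\cX_\alpha,\ol{\cX}_\alpha)}$, rather than arguing that the proof carries over to the $F$-enriched setting.
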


\begin{proof}
Assume that $\cE$ is sent as $\cE \mapsto \cE_{\alpha} \mapsto 
E_{\alpha}$ by the functors in \eqref{func1} 
and let $\ti{E}_{\alpha}$ be an object in 
$\FMIC(\cX_{\alpha,K},\ol{\cX}_{\alpha,K})$ which is sent to 
$E_{\alpha}$ by the functor \eqref{func2}. Then, by Theorem \ref{coc}, 
$\ti{E}_{\alpha}$ is overconvergent, that is, there exists an object 
$\ti{\cE}_{\alpha} \in \FIsocd(X \cap \ol{X}_{\alpha}, \ol{X}_{\alpha})$ 
which is sent to $\ti{E}_{\alpha}$ by 
$\Phi_{(\cX_{\alpha}, \ol{\cX}_{\alpha})}$. Now let us consider the 
following commutative diagram: 
\begin{equation*}
\begin{CD}
\FIsocd(X \cap \ol{X}_{\alpha}, \ol{X}_{\alpha}) 
@>{\Phi_{(\cX_{\alpha}, \ol{\cX}_{\alpha})}}>>
\FMIC(\cX_{\alpha,K},\ol{\cX}_{\alpha,K}) \\ 
@VVV @VVV \\ 
\FIsoc(U\cap \ol{X}_{\alpha}) 
@>{\Phi_{\cU_{\alpha}}}>> \FMIC(\cU_{\alpha}). 
\end{CD}
\end{equation*}
From the commutativity of the diagram, we see that both 
$\cE_{\alpha}$ and 
the restriction of $\ti{\cE}_{\alpha}$ to $\FIsoc(U\cap \ol{X}_{\alpha})$ 
is sent to $E_{\alpha}$ by $\Phi_{\cU_{\alpha}}$. Since 
$\Phi_{\cU_{\alpha}}$ is fully faithful, we can conclude that 
they are isomorphic. \par 
Now let us glue the $\ti{\cE}_{\alpha}$'s: Since 
$\cE_{\alpha}$'s $(\alpha \in \Delta)$ are 
the restriction of $\cE$, they form a descent data on 
$\FIsoc(U\cap \ol{X}_{\alpha}\cap  \ol{X}_{\alpha'})$ and 
$\FIsoc(U\cap \ol{X}_{\alpha}\cap  \ol{X}_{\alpha'} \cap \ol{X}_{\alpha''})$ 
$(\alpha,\alpha',\alpha'' \in \Delta)$. Then, since the restriction functor 
$$ \FIsocd(X \cap \ol{X}_{\alpha}, \ol{X}_{\alpha}) 
\lra \FIsoc(U\cap \ol{X}_{\alpha})$$ 
(and the functor we obtain by replacing $\ol{X}_{\alpha}$ by 
$\ol{X}_{\alpha} \cap \ol{X}_{\alpha'}$ or 
$\ol{X}_{\alpha} \cap \ol{X}_{\alpha'} \cap \ol{X}_{\alpha''}$) is 
fully faithful by \cite[4.1.1]{tsuzuki} and \cite[4.2.1]{kedlayaII} 
(see also \cite{kedlayaff}), 
we see that $\ti{\cE}_{\alpha}$'s also form a descent data on 
$\FIsocd(X \cap \ol{X}_{\alpha}\cap \ol{X}_{\alpha'}, 
\ol{X}_{\alpha} \cap \ol{X}_{\alpha'})$ and 
$\FIsocd(X \cap \ol{X}_{\alpha}\cap \ol{X}_{\alpha'} \cap \ol{X}_{\alpha''}, 
\ol{X}_{\alpha} \cap \ol{X}_{\alpha'} \cap \ol{X}_{\alpha''})$ 
 $(\alpha,\alpha',\alpha'' \in \Delta)$. Hence $\ti{\cE}_{\alpha}$'s 
glue to an object $\ti{\cE}$ in 
$\FIsocd(X,\ol{X})$ whose restriction to $\FIsoc(U)$ is 
$\cE$. So we are done. 
\end{proof}

\section{Purity for overconvergence}

In this section, we will prove the following theorem, 
which is the main result in this paper. 

\begin{theorem}\label{main}
Let $(X,\ol{X})$ be a smooth pair and 
let $\ol{Z} \subseteq Z \subseteq \ol{X}$ be closed 
subschemes of codimension at least $2$. Then the restriction 
functor 
\begin{equation}\label{mf}
\FIsocd(X,\ol{X}) \lra \FIsocd(X \setminus Z, 
\ol{X} \setminus \ol{Z})
\end{equation}
is an equivalence of categories. 
\end{theorem}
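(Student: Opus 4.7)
The strategy is to prove full faithfulness directly from known results and deduce essential surjectivity from Corollary~\ref{cor} after a local rigid-analytic extension step.

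\emph{Full faithfulness.} I factor the restriction as
$$\FIsocd(X, \ol{X}) \os{\phi}{\lra} \FIsocd(X\setminus Z, \ol{X}\setminus\ol{Z}) \os{G}{\lra} \FIsoc(X\setminus Z),$$
where $G$ is fully faithful by \cite[4.2.1]{kedlayaII}. The composite $G\circ \phi$ also factors as $\FIsocd(X, \ol{X}) \os{=}{\lra} \FIsocd(X\setminus (Z\cap X), \ol{X}) \hra \FIsoc(X\setminus Z)$: the first arrow is an equivalence by the $F$-analog of Kedlaya's purity \cite[5.3.3]{kedlayaI} (applicable since $Z\cap X$ has codimension $\geq 2$ in $X$ and $X\setminus Z$ is dense in $\ol{X}$), and the second is again fully faithful by \cite[4.2.1]{kedlayaII}. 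Hence $G\circ \phi$ is fully faithful, which combined with full faithfulness of $G$ forces $\phi$ to be fully faithful.

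\emph{Essential surjectivity.} Given $\cE \in \FIsocd(X\setminus Z, \ol{X}\setminus\ol{Z})$, put $U := X\setminus Z$ and $\cE_0 := \cE|_U \in \FIsoc(U)$. My plan is to apply Corollary~\ref{cor} to $\cE_0$ relative to the smooth pair $(X, \ol{X})$ with dense open $U \subseteq X$; the resulting $\ti\cE \in \FIsocd(X, \ol{X})$ restricts to $\cE_0$, and by the full faithfulness above, its restriction to $\FIsocd(X\setminus Z, \ol{X}\setminus \ol{Z})$ must be isomorphic to $\cE$. It remains to verify condition $(*)$. For this, fix an affine $\ol{X}_\alpha$ (shrinking if necessary) with a lift $(\cX_\alpha, \ol{\cX}_\alpha)$, a Frobenius lift, and a closed formal lift $\cZ_\alpha \subseteq \ol{\cX}_\alpha$ of $Z\cap\ol{X}_\alpha$. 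Setting $\cM_\alpha := \ol{\cX}_\alpha \setminus \cZ_\alpha$ and $\cN_\alpha := \cX_\alpha \cap \cM_\alpha$, the pair $(\cN_\alpha, \cM_\alpha)$ is a formal smooth pair lifting the smooth pair $(X\setminus Z, \ol{X}\setminus Z) \cap \ol{X}_\alpha$. Restricting $\cE$ to this smooth sub-pair and realizing via $\Phi_{(\cN_\alpha, \cM_\alpha)}$ yields an object of $\FMIC(\cN_{\alpha, K}, \cM_{\alpha, K})$, that is, a $\nabla$-module with Frobenius on a strict neighborhood of $\cN_{\alpha, K}$ in $\cM_{\alpha, K}$. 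The crucial step is then to extend this datum across $\sp^{-1}(Z\cap\ol{X}_\alpha) = \ol{\cX}_{\alpha, K} \setminus \cM_{\alpha, K}$, which is a rigid-analytic region of codimension $\geq 2$ in $\ol{\cX}_{\alpha, K}$, to obtain an object of $\FMIC(\cX_{\alpha, K}, \ol{\cX}_{\alpha, K})$, i.e., a $\nabla$-module with Frobenius on a strict neighborhood of $\cX_{\alpha, K}$ in $\ol{\cX}_{\alpha, K}$. I would carry this out using a Hartogs/Bartenwerfer-type extension theorem for coherent sheaves in rigid analysis, with the Frobenius structure providing the control needed to extend $\nabla$ and $\Psi$ uniquely and to promote the extension to a genuine strict neighborhood of $\cX_{\alpha, K}$ in $\ol{\cX}_{\alpha, K}$.

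\emph{Main obstacle.} I expect the rigid-analytic extension in the previous paragraph to be the chief difficulty: jointly extending the locally free module, the connection, and the Frobenius across a codimension-$\geq 2$ region and verifying the strict-neighborhood condition for the extended object. The coherent-sheaf extension is classical, but the compatibility with $\nabla$ and the overconvergence control at the bad locus are the delicate points; the $F$-structure is essential here, consistent with the theorem being formulated for $\FIsocd$ rather than $\Isocd$.
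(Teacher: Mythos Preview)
Your full-faithfulness argument is essentially the one in the paper, and your overall architecture for essential surjectivity---reduce to verifying condition $(*)$ of Corollary~\ref{cor} via a local rigid-analytic extension---is the right idea; indeed the paper's Step~11 does exactly this in a very specific model case. The gap is in your proposed extension step.

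The concrete error is the claim that $\sp^{-1}(Z\cap \ol X_\alpha)=\ol\cX_{\alpha,K}\setminus\cM_{\alpha,K}$ is ``a rigid-analytic region of codimension $\geq 2$.'' It is not: the tube $\sp^{-1}(Z)$ over a closed subscheme $Z$ is an \emph{admissible open} subspace of $\ol\cX_{\alpha,K}$, not a closed analytic subset. For the prototype $\ol X=\Af^2_k$, $X=\G_m^2$, $Z=\{t_1=t_2=0\}$, one has $\ol\cX_K=A^2_K[0,1]$ and $\sp^{-1}(Z)=\{|t_1|<1,|t_2|<1\}$, the open unit bidisk; its complement $\cM_K=\{|t_1|=1\}\cup\{|t_2|=1\}$ is a thin boundary region, not the complement of a codimension-$2$ analytic set. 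Hartogs/Bartenwerfer extension theorems remove closed analytic subsets of codimension $\geq 2$ from a smooth rigid space; they say nothing about extending across the open tube of a formal codimension-$2$ locus. So the invocation of ``Hartogs'' here is not applicable, and there is no off-the-shelf result that extends a $\nabla$-module with Frobenius from a strict neighborhood in $\cM_K$ to one in $\ol\cX_K$.

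This is exactly why the paper spends Steps~1--10 reducing to the model $(X_0,\ol X_0)=(\G_m^s\times\Af^{d-s},\Af^d)$ with $Z_0=Y_0^{(2)}$: in that case $\cM_K$ is covered by the pieces $\ol\cX'_{0,\{i\},K}$, strict neighborhoods are polyannuli, and the extension (the paper's claim~2) is carried out by bare hands via the explicit overconvergent rings $A_{I,K}$ using $A_{I,K}\cap A_{I',K}=A_{I\cup I',K}$, flatness of $F^*$, and Elkik's lifting of projective modules. The Frobenius structure is used already in the reduction steps (e.g.\ to pass to $Z^{(n)}_{\mathrm{red}}$ generically smooth, and for full faithfulness in the gluing), not only at the final extension. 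Your proposal skips the reductions and hopes for a general extension lemma that, as stated, does not exist; to make your approach work you would in effect have to reprove the content of Steps~8--11.
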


Note that, in the case where $\ol{Z}$ is empty, 
this is reduced to the purity theorem of Kedlaya \cite[5.3.3]{kedlayaI}. \par 
Before the proof, note that the functor \eqref{mf} is fully faithful 
because we have the fully faithful functors 
$$ \FIsocd(X,\ol{X}) \lra \FIsocd(X \setminus Z,\ol{X}) \lra 
\FIsoc(X \setminus Z), $$ 
$$ \FIsocd(X \setminus Z, \ol{X} \setminus \ol{Z}) \lra 
\FIsoc(X \setminus Z). $$
(The full faithfulness of the first functor in the first line is proven 
in \cite[4.1.1]{tsuzuki} and the full faithfulness for other functors 
are proven in \cite[4.2.1]{kedlayaII}.) So it suffices to prove that the 
functor \eqref{mf} is essentially surjective. 

\begin{proof}
The proof is divided into several steps. \par 
\ul{Step 1}: First note that it suffices to prove 
the theorem in the case $Z=\ol{Z}$. Indeed, we can factorize the 
functor \eqref{mf} as 
$$ \FIsocd(X,\ol{X}) \lra \FIsocd(X \setminus \ol{Z}, \ol{X} \setminus \ol{Z})
\lra \FIsocd(X \setminus Z, \ol{X} \setminus \ol{Z}) $$ 
and the second functor is an equivalence of categories by 
\cite[5.3.3]{kedlayaI}. 
So it suffices to prove the equivalence of the first functor. 
In the following, we always assume that $Z = \ol{Z}$. \par 
\ul{Step 2}: Next, note that it suffices to prove the theorem in the 
following cases: 
\begin{enumerate}
\item[(a)] The case $Z \subseteq \ol{X} \setminus X$. 
\item[(b)] The case $Z \subseteq X$. 
\end{enumerate}
Indeed, if we put $Z' := Z \cap (\ol{X} \setminus X)$, 
we can factorize the functor \eqref{mf} (with $Z=\ol{Z}$) as 
$$ \FIsocd(X,\ol{X}) \lra \FIsocd(X, \ol{X} \setminus Z')
\lra \FIsocd(X \setminus Z, \ol{X} \setminus Z) $$ 
and the equivalence of the first (resp. the second) functor 
follows from the theorem in the case (a) (resp. (b)). \par 
\ul{Step 3}: Let us prove that we may assume that $Z$ is smooth. 
For $n \in \N$, let us denote the $q^n$-th power map 
$\Spec k \lra \Spec k$ simply by $q^n$ and for a $k$-scheme $S$, 
let us put $S^{(n)} := S \times_{\Spec k,q^n} \Spec k$. Then the 
commutative diagram
\begin{equation*}
\begin{CD}
(X,\ol{X}) @>{F_{\rm rel}}>> (X^{(n)},\ol{X}^{(n)}) @>>> (X,\ol{X}) \\ 
@VVV @VVV @VVV \\ 
\Spec k @= \Spec k @>{q^n}>> \Spec k \\ 
@V{\cap}VV @V{\cap}VV @V{\cap}VV \\ 
\Spf O_K @= \Spf O_K @>{{\sigma^*}^n}>> \Spf O_K
\end{CD}
\end{equation*}
(where the upper left square is Cartesian and 
$F_{\rm rel}$ is the relative Frobenius morphism associated to 
the $q^n$-th power maps) induces the functors 
\begin{equation}\label{ab}
\FIsocd(X,\ol{X}) \os{\alpha}{\lra} \FIsocd(X^{(n)},\ol{X}^{(n)}) 
\os{\beta}{\lra} \FIsocd(X,\ol{X}).
\end{equation}
Then, for $(\cE, \Psi) \in \FIsocd(X,\ol{X})$, we have 
$$ 
\beta \circ \alpha (\cE, \Psi) = ({F^*}^n\cE, {F^*}^n\Psi) 
\os{\cong}{\lra} (\cE, \Psi), $$
where the last isomorphism is the composite 
$$ 
({F^*}^n\cE, {F^*}^n\Psi) \os{{F^*}^{n-1}\Psi}{\lra}
({F^*}^{n-1}\cE, {F^*}^n\Psi) \os{{F^*}^{n-1}\Psi}{\lra} \cdots 
\os{\Psi}{\lra} (\cE, \Psi). $$
Hence $\beta$ is essentially surjective. By the same argument, we see
 that the functors 
\begin{align*}
& \FIsocd(X^{(n)}, \ol{X}^{(n)} \setminus Z^{(n)}) \lra 
\FIsocd(X,\ol{X} \setminus Z), \quad \text{(in the case (a))} \\ 
& \FIsocd(X^{(n)} \setminus Z^{(n)}, \ol{X}^{(n)} \setminus Z^{(n)}) \lra 
\FIsocd(X \setminus Z,\ol{X} \setminus Z) \quad \text{(in the case (b))}  
\end{align*}
are also essentially surjective. So, to prove the theorem for the pair 
$(X,\ol{X})$ and the closed subscheme $Z$ in the case (a) or (b), 
it suffices to prove the theorem for the pair 
$(X^{(n)},\ol{X}^{(n)})$ and the closed subscheme $Z^{(n)}$ for some $n$, 
and then it suffices to prove the theorem for the pair 
$(X^{(n)},\ol{X}^{(n)})$ and the closed subscheme 
$Z^{(n)}_{\rm red}$($=$ the reduced closed subscheme of $Z^{(n)}$ with same 
underlying topological space). \par 
Now let us take $n \in \N$ in order that $Z^{(n)}_{\rm red}$ is 
generically smooth. 
Let $Z_0 \subseteq Z^{(n)}_{\rm red}$ be a dense open subscheme such that 
$Z_0$ is smooth and put 
$Z_1 := Z^{(n)}_{\rm red} \setminus Z_0$. Then in the case (a), we
 can factorize the functor 
$$ \FIsocd(X^{(n)}, \ol{X}^{(n)}) \lra 
\FIsocd(X^{(n)}, \ol{X}^{(n)} \setminus Z^{(n)}_{\rm red}) 
$$ 
(the functor \eqref{mf} for $(X^{(n)}, \ol{X}^{(n)})$ and 
$Z^{(n)}_{\rm red}$) as 
\begin{equation}\label{fac_sm}
\FIsocd(X^{(n)},\ol{X}^{(n)}) \ra \FIsocd(X^{(n)}, \ol{X}^{(n)} 
\setminus Z_1) \ra \FIsocd(X^{(n)}, \ol{X}^{(n)} 
\setminus Z^{(n)}_{\rm red})
\end{equation} 
and $(\ol{X}^{(n)} \setminus Z_1) \setminus 
(\ol{X}^{(n)} \setminus Z^{(n)}_{\rm red}) = Z_0$ is smooth. 
So, if we assume the theorem in the case $Z$ is smooth, the second functor 
in \eqref{fac_sm} 
is an equivalence. Moreover, the dimension of $\ol{X}^{(n)}$ is equal to the 
dimension of $\ol{X}$ and  
the codimension of $Z_1$ in $\ol{X}^{(n)}$ 
is strictly greater than that of $Z$ in $\ol{X}$. So we can prove 
the equivalence of the first functor in \eqref{fac_sm} 
by descending induction on the codimension 
of $Z$ in $\ol{X}$. We can prove the case (b) in the same way. \par 
\ul{Step 4}: Note that we can work Zariski locally on $\ol{X}$: 
Indeed, the categories $\FIsocd(X,\ol{X}), \FIsocd(X \setminus Z, 
\ol{X} \setminus Z)$ satisfy the descent property with respect to 
the Zariski topology on $\ol{X}$ and we already know the full faithfulness 
of the functor \eqref{mf} (or its analogue when we shrink $\ol{X}$), 
the descent data concerning 
$\FIsocd(X \setminus Z, \ol{X} \setminus Z)$ lifts to the descent data 
concerning $\FIsocd(X,\ol{X})$. So it suffices to prove the theorem 
Zariski locally on $\ol{X}$. \par 
\ul{Step 5}: Here we prove that it suffices to prove the theorem in 
the case (a) with $Z$ smooth. 
To prove this, it suffices to reduce the case (b) with $Z$ smooth 
to the case (a) with $Z$ smooth. 
So assume that we are in the case (b) with $Z$ smooth. 
Then, since $Z$ does not meet 
$\ol{X} \setminus X$ and we can work Zariski locally on $\ol{X}$, we may 
assume that either $Z$ or $\ol{X} \setminus X$ is empty. In the case 
$Z$ is empty, the theorem is obvious. In the case 
$\ol{X} \setminus X$ is empty, it suffices to prove the equivalence of 
the functor $\FIsocd(X,X) \lra \FIsocd(X \setminus Z, X \setminus Z).$ 
This functor is facorized as 
\begin{equation}\label{hojyo1}
\FIsocd(X,X) \lra \FIsoc(X \setminus Z, X) \lra 
\FIsocd(X \setminus Z, X \setminus Z) 
\end{equation}
and the first functor is an equivalence by \cite[5.3.3]{kedlayaI}. 
So it suffices 
to prove that the second functor is an equivalence. Since we can consider 
Zariksi locally and since $Z$ is smooth, 
we may assume that there exists 
a smooth divisor $D \subseteq X$ containing $Z$. 
Then, by \cite[5.3.7]{kedlayaI} we have the canonical equivalence 
of categories
\begin{equation*}
\FIsocd(X \setminus Z, X) \os{=}{\lra} 
\FIsocd(X \setminus Z, X \setminus Z) 
\times_{\FIsocd(X \setminus D, X \setminus Z)}
\FIsocd(X \setminus D, X)
\end{equation*}
via which the second functor in \eqref{hojyo1} is regarded as 
the first projection 
$$ \FIsocd(X \setminus Z, X \setminus Z) 
\times_{\FIsocd(X \setminus D, X \setminus Z)}
\FIsocd(X \setminus D, X) \lra \FIsocd(X \setminus Z, X \setminus Z). $$
So it suffices to prove the equivalence of the functor 
$\FIsocd(X \setminus D, X) \lra \FIsocd(X \setminus D, X \setminus Z)$ 
to prove the theorem in this case, and it is nothing but the theorem 
in the case (a). it suffices to prove the theorem in 
the case (a) with $Z$ smooth. \par 
\ul{Step 6}: 
Let us put $Y := \ol{X} \setminus X$ (which is a simple normal 
crossing divisor in $\ol{X}$) 
and let $Y = \bigcup_{i=1}^r Y_i$ be the decomposition 
of $Y$ into the irreducible components (so $r$ denotes the number of 
irreducible components of $Y$). 
For a subset $I \subseteq [1,r]$, 
we put $Y_I := \bigcap_{i\in I} Y_i$ and for $s \in \N$, we put 
$Y^{(s)} := \bigcup_{|I|=s} Y_I$. In this step, we show that it suffices 
to prove the theorem in the following cases: 
\begin{enumerate}
\item[(a-$1$)]: The case (a) with 
$r=1$ and $Z$ smooth. 
\item[(a-$s$)]\,($s \geq 2$): The case (a) with 
$r=s$, $Y_{[1,s]} \not= \emptyset$ and $Z = Y^{(2)}$. 
\end{enumerate}
(Note that, in the cases (a-$s$)\,($s \geq 2$), $Z$ is no more smooth.) \par 
Let us assume that the theorem is true in the case 
(a-$s$) \,($s \geq 1$) and let $(X,\ol{X}), Z\subseteq \ol{X}$ be as in 
case (a) with $Z$ smooth. First, let us note that, to prove the theorem 
for this $(X,\ol{X})$ and $Z \subseteq \ol{X}$, 
it suffices to prove the equivalence of the composite functor 
\begin{equation}\label{hojyo3}
\FIsocd(X,\ol{X}) \lra \FIsocd(X,\ol{X} \setminus Z) \lra 
\FIsocd(X,\ol{X} \setminus (Y^{(2)} \cup Z)), 
\end{equation}
because they are fully faithful. 
We can factor the functor 
\eqref{hojyo3} as follows: 
$$ 
\FIsocd(X,\ol{X}) 
\lra \FIsocd(X,\ol{X} \setminus Y^{(2)}) 
\lra  \FIsocd(X,\ol{X} \setminus (Y^{(2)} \cup Z)). 
$$
So, to prove the theorem, it suffices to prove the equivalence of the functors 
\begin{align}
&\FIsocd(X,\ol{X} \setminus Y^{(2)}) \lra 
\FIsocd(X,\ol{X} \setminus (Y^{(2)} \cup Z)), \label{hojyo4}\\ 
&\FIsocd(X,\ol{X}) \lra 
\FIsocd(X,\ol{X} \setminus Y^{(2)}) \label{hojyo5}. 
\end{align}
The equivalence of the functor \eqref{hojyo4} is nothing but the theorem 
for the pair $(X,\ol{X} \setminus Y^{(2)})$ and the closed subscheme 
$Z \setminus Y^{(2)} \subseteq \ol{X} \setminus Y^{(2)}$. Note that 
$Y \setminus Y^{(2)} = (\ol{X} \setminus Y^{(2)}) \setminus X$ is a 
smooth divisor and $Z \setminus Y^{(2)}$ is smooth. Hence we are locally in 
the situation in the case (a) with $r=1$, $Z$ smooth, that is, 
the situation in the case (a-$1$). On the other hand, we can 
reduce the equivalence of the functor \eqref{hojyo5} to 
(a-$s$) \,($s \geq 2$) in the following way: For any closed point 
$x \in \ol{X}$, let $Y_{\ni x} := \bigcup_{i \in I_x} Y_i$ be the 
union of irreducible components of $Y$ containing $x$. Then we can take 
an open subscheme $\ol{X}_x$ of $\ol{X}$ containing $x$ such that 
$Y \cap \ol{X}_x = Y_{\ni x} \cap \ol{X}_x$. Since we can consider 
Zariski locally, the equivalence of the functor \eqref{hojyo5} is reduced 
to the theorem for the pair $(X \cap \ol{X}_x, \ol{X}_x)$ and 
the closed subscheme $(Y \cap \ol{X}_x)^{(2)} = Y_{\ni x}^{(2)} \cap 
\ol{X}_x$. Let us put $s := |I_x|$. Then, when $s \geq 2$, 
it is in the situation (a-$s$). When $s \leq 1$, it is trivially true 
since $Y_{\ni x}^{(2)} = \emptyset$ in this case. 
Therefore, we are reduced to the case
 (a-$s$) \,($s \geq 1$). \par 
\ul{Step 7}: In this step, we show that it suffices 
to prove the theorem in the case (a-$s$) with $s \geq 2$. To do this, 
it suffices to reduce the proof of the theorem in the case 
(a-$1$) to the case (a-$2$). So assume that we are in the situation 
(a-$1$). Then, since we may consider locally, we may assume that 
there exists a smooth divisor $Y' \subseteq \ol{X}$ which meets 
$Y$ transversally such that $Y' \cap Y$ contains $Z$. In this case, 
it suffices to prove the equivalence of the composite functor 
$$ \FIsoc(X,\ol{X}) \lra \FIsoc(X,\ol{X} \setminus Z) \lra 
\FIsoc(X,\ol{X} \setminus (Y \cap Y')). $$
Then, by \cite[5.3.7]{kedlayaI}, 
we can rewrite the above composite as follows: 
\begin{align*}
\FIsocd(X,\ol{X}) & \os{=}{\lra} 
\FIsocd(X \setminus Y', \ol{X}) 
\times_{\FIsocd(X \setminus Y', X)} 
\FIsocd(X,X) \\ 
& \lra 
\FIsocd(X \setminus Y', \ol{X} \setminus (Y \cap Y'))  
\times_{\FIsocd(X \setminus Y', X)} 
\FIsocd(X,X)  \\ & \os{=}{\lla} 
\FIsocd(X,\ol{X} \setminus (Y \cap Y')). 
\end{align*}
So the equivalence we need is reduced to the equivalence of the functor 
$$ \FIsocd(X \setminus Y', \ol{X}) \lra 
\FIsocd(X \setminus Y', \ol{X} \setminus (Y \cap Y')) $$ 
and this is the theorem for the pair 
$(X \setminus Y', \ol{X})$ and the closed subscheme 
$Y \cap Y'\subseteq \ol{X}$. Since this is contained in 
the case (a-$2$), we are done. \par 
\ul{Step 8}: Assume now that we are in the situation (a-$s$) with 
$s \geq 2$. 
Let us take a closed point $x \in \ol{X}$. 
Let $Y_{\ni x} = \bigcup_{i=1}^{s'} Y_i$ be the union of irreducible 
components of $Y$ containing $x$ ($s' \leq s$) and 
let $Y_{\not\ni x}$ be the union of irreducible 
components of $Y$ not containing $x$. By applying 
\cite[Theorem 2]{kedlayamore} to $\ol{X} \setminus 
Y_{\not\ni x}$ and the simple 
normal crossing divisor $Y \setminus Y_{\not\ni x} = 
Y_{\ni x} \setminus Y_{\not\ni x}$ 
on it, we see that there exists an open subscheme $\ol{X}_x$ in 
$\ol{X} - Y_{\not\ni x}$ containing $x$ and a finite etale morphism 
$f_0: \ol{X}_x \lra \Af^{d}_k$ for some $d \geq s'$ such that, for 
$1 \leq i \leq s'$, $f_0(Y_i \cap \ol{X}_x)$ is contained in 
the $i$-th coordinate hyperplane $H_i$ of $\Af^{d}_k$. Then 
$Y_i \cap \ol{X}_x \subseteq f_0^{-1}(H_i)$ 
is an open and closed immersion, and so 
$Y \cap \ol{X}_x = Y_{\ni x} \cap \ol{X}_x$ is a simple normal 
crossing subdivisor of $\bigcup_{i=1}^n f_0^{-1}(H_i)$. \par 
Since we can consider Zariski locally, it suffices to prove the theorem 
for the pair $(X \cap \ol{X}_x, \ol{X}_x)$ and the closed subscheme 
$Z \cap \ol{X}_x = Y^{(2)} \cap \ol{X}_x 
= Y_{\ni x}^{(2)} \cap \ol{X}_x$. 
It is contained in the case (a-$s'$) when $s \geq 2$. When $s' \leq 1$, 
the theorem in this case is trivially true since $Y^{(2)}_{\ni x} = 
\emptyset$. 
Summing up the argument here, we see the 
following: It suffices to prove the theorem in the situation 
(a-$s$) with $s \geq 2$ which admits a finite etale morphism 
$f: \ol{X} \lra \Af^d_k$ (for some $d \geq s$) such that 
$Y = \ol{X} \setminus X$ is a simple normal crossing subdivisor 
of $\ti{Y} := \bigcup_{i=1}^s f^{-1}(H_i)$, where $H_i$ denotes the 
$i$-th coordinate hyperplane of $\Af^d_k$. \par 
\ul{Step 9}: Let the notation be as above and 
let us put $X' := X \setminus \ti{Y}, 
\ti{Z} := \ti{Y}^{(2)}$. Consider the following commutative 
diagram: 
\begin{equation}\label{hojyo10}
\begin{CD}
\FIsocd(X,\ol{X}) @>>> \FIsocd(X',\ol{X}) @>>> 
\FIsocd(X',\ol{X} \setminus \ti{Z}) \\ 
@VVV @VVV @VVV \\ 
\FIsocd(X,X) @>>> \FIsocd(X',X) @>>> \FIsocd(X',X \setminus \ti{Z}). 
\end{CD}
\end{equation}
Now assume that the theorem is true for the pair $(X',\ol{X})$ and the 
closed subscheme $\ti{Z} \subseteq \ol{X}$, and let us take an object 
$\cE$ in $\FIsocd(X,\ol{X} \setminus Z)$. Then the restriction of 
$\cE$ to $\FIsocd(X',\ol{X} \setminus \ti{Z})$ extends to an object 
$\cF$ in $\FIsocd(X',\ol{X})$. The restriction of $\cF$ to 
$\FIsocd(X',X)$ is canonically isomorphic to the restriction of 
$\cE$ because so do they in the category 
$\FIsocd(X',X \setminus \ti{Z})$ (note that all the functors in 
\eqref{hojyo10} are fully faithful). Now let us note that the left square 
is Cartesian in the sense that the induced functor 
\begin{equation}\label{hojyo11}
\FIsocd(X,\ol{X}) \lra \FIsocd(X',\ol{X}) \times_{\FIsocd(X',X)} 
\FIsocd(X,X)
\end{equation}
is an equivalence of categories (\cite[5.3.7]{kedlayaI}). 
Hence the object 
$(\cF, \text{the restriction of $\cE$})$ in the target of \eqref{hojyo11} 
lifts to an object $\ti{\cE}$ in $\FIsocd(X,\ol{X})$ and it gives the 
lift of $\cE$. So we see that it suffices to prove the theorem for 
the pair $(X',\ol{X})$ and the closed subscheme $\ti{Z} \subseteq \ol{X}$: 
That is, to prove the theorem, we may assume that 
there exists a finite etale morphism $f: \ol{X} \lra \Af^d_k$ and some 
$s \leq d$ such that $Y := \ol{X} \setminus X = \bigcup_{i=1}^s f^{-1}(H_i)$ 
and that $Z = Y^{(2)}$. (Note that we do not assume the condition 
(a-$s$) any more.) \par 
\ul{Step 10}: Let the notation be as above that let us put 
$\ol{X}_0 := \Af^d_k, X_0 := \Af^d_k \setminus \bigcup_{i=1}^s H_i, 
Y_0 := \bigcup_{i=1}^s H_i$, $Z_0 := Y_0^{(2)}$. Then, by 
\cite[5.1]{tsuzuki}, we have the puch-out functors $f_*$ as in the 
following commutative diagram: 
\begin{equation}\label{hojyo12}
\begin{CD}
\FIsocd(X,\ol{X}) @>>> \FIsocd(X,\ol{X} \setminus Z) \\ 
@V{f_*}VV @V{f_*}VV \\ 
\FIsocd(X_0,\ol{X}_0) @>>> \FIsocd(X_0, \ol{X}_0 \setminus Z_0). 
\end{CD}
\end{equation}
Moreover, it is known that, for any $\cE \in \FIsocd(X,\ol{X} \setminus Z)$, 
there exist morphisms $\cE \os{\alpha}{\hra} 
f^*f_*\cE \os{\beta}{\twoheadrightarrow} \cE$ which 
makes $\cE$ a direct summand of $f^*f_*\cE$. (See \cite[2.6.8]{kedlayaI}.) 
Now let us assume that the theorem is true for the pair 
$(X_0,\ol{X}_0)$ and the closed subscheme $Z_0 \subseteq \ol{X}_0$, and 
let us take an object $\cE$ in $\FIsocd(X,\ol{X} \setminus Z)$. 
Then there exists an object $\cF$ in $\FIsocd(X_0,\ol{X}_0)$ which 
restricts to $f_*\cE$ in $\FIsocd(X_0, \ol{X}_0 \setminus Z_0)$. 
Then $f^*\cF \in \FIsocd(X,\ol{X})$ 
restricts to $f^*f_*\cE$ in $\FIsocd(X,\ol{X} \setminus Z)$. 
Now let us note that $\cE$ is isomorphic to the image of the composite 
$\alpha \circ \beta: f^*f_*\cE \lra f^*f_*\cE$. Since the upper horizontal 
functor in \eqref{hojyo12} is fully faithful and exact, $\alpha \circ \beta$ 
lifts to an endomorphism $\gamma: f^*\cF \lra f^*\cF$ of $f^*\cF$ and 
${\rm Im} \gamma$ is an object in $\FIsocd(X,\ol{X})$ which restricts to 
$\cE$ in $\FIsocd(X,\ol{X} \setminus Z)$. So the theorem for $(X,\ol{X})$ and 
$Z \subseteq \ol{X}$ is true. Hence we have proved that 
it suffices to prove the theorem for the pair $(X_0,\ol{X}_0)$ and 
the closed subscheme $Z_0 \subseteq \ol{X}_0$. \par 
\ul{Step 11}: Let $(X_0,\ol{X}_0)$ and $Z_0 \subseteq \ol{X}_0$ be as above. 
In this step, we finish the proof of the theorem by giving 
a proof of the theorem for $(X_0,\ol{X}_0)$ and 
$Z_0 \subseteq \ol{X}_0$. \par 
Let us put $\ol{\cX}_0 := \fAf^d_{O_K}, \cX_0 := 
\fG^s_{m,O_K} \times \fAf^{d-s}_{O_K}$. Let $t_1,...,t_d$ be the coordinate 
function of $\ol{\cX}_0$ and let $F: \ol{\cX}_0 \lra \ol{\cX}_0$ be the 
morphism over $\sigma^*: \Spf O_K \lra \Spf O_K$ defined by 
$F^*(t_i) := t_i^{q} \,(1 \leq i \leq d)$. 
Then $(\cX_0,\ol{\cX}_0)$ is a formal smooth pair with special fiber 
$(X_0,\ol{X}_0)$ and $F$ defines an endomorphism on it which lifts the 
$q$-th power Frobenius on $\ol{X}_0$. Let $\ol{\cX}'_0$ 
be the open formal 
subscheme of $\ol{\cX}_0$ with special fiber $\ol{X}_0 \setminus Z_0$. 
Then we have the commutative diagram 
\begin{equation*}
\begin{CD}
\FIsocd(X_0, \ol{X}_0 \setminus Z_0) 
@>{\Phi_{(\cX_0,\ol{\cX}'_0)}}>> \FMIC(\cX_{0,K},\ol{\cX}'_{0,K}) \\ 
@VVV @VVV \\ 
\FIsoc(X_0) @>{\Phi_{\cX_0}}>> \FMIC(\cX_{0,K}). 
\end{CD}
\end{equation*}
Let $\cE$ be an object in $\FIsocd(X_0, \ol{X}_0 \setminus Z_0)$ and 
put $E := \Phi_{(\cX_0,\ol{\cX}'_0)}(\cE)$. To prove the theorem, 
it suffices to find an object in $\FIsocd(X_0, \ol{X}_0)$ whose 
restriction to $\FIsoc(X_0)$ is isomorphic to the restriction of $\cE$ to 
$\FIsoc(X_0)$, because the functor 
$\FIsocd(X_0, \ol{X}_0 \setminus Z_0) \lra \FIsoc(X_0)$ is 
fully faithful. Then, by Corollary \ref{cor}, it suffices to prove that the 
restriction of $E$ to $\FMIC(\cX_{0,K})$ is extendable to an object in 
$\FMIC(\cX_{0,K},\ol{\cX}_{0,K})$. So we see that 
it suffices to prove the following claim: \\
\quad \\
{\bf claim 1}: Let $E$ be an object in $\FMIC(\cX_{0,K},\ol{\cX}'_{0,K})$. Then there exists an object $\ti{E}$ in $\FMIC(\cX_{0,K},\ol{\cX}_{0,K})$ such 
that their restrictions to $\FMIC(\cX_{0,K})$ are isomorphic. \\
\quad \\
For a subset $I$ of $[1,s]$, let 
$\ol{X}'_{0,I}$ be the open subscheme of 
$\ol{X}_0$ defined as 
$\ol{X}'_{0,I} := \ol{X}_0 \setminus 
\{\prod_{j \in [1,s] \setminus I} t_j = 0\}$ and 
let $\ol{\cX}'_{0,I}$ be the open formal subscheme of $\ol{\cX}_0$ whose 
special fiber is equal to $\ol{X}'_{0,I}$. 
Then 
\begin{align*}
\fU_{0,I,\lambda} 
& := \{x \in \ol{\cX}'_{0,I,K} \,|\,\forall j \in I, |t_j(x)| \geq \lambda\} \\ & = \{x \in \ol{\cX}_{0,K} \,|\,\forall j \in [1,s] \setminus I, 
|t_j(x)| = 1 \text{ and } \forall j \in I, |t_j(x)| \geq \lambda\} \\ 
& \hspace{8.5cm} (\lambda \in [0,1) \cap \Gamma^*)
\end{align*}
gives a cofinal system of strict neighborhoods of 
$\cX_{0,K}$ in $\ol{\cX}'_{0,I,K}$. Let us put 
$$ 
A_{I,K} := \varinjlim_{\lambda\to 1}\Gamma(\fU_{0,I,\lambda}, 
\cO_{\fU_{0,I,\lambda}}) 
= \Gamma(\ol{\cX}'_{0,I,K}, j^{\dagger}\cO_{\ol{\cX}'_{0,I,K}})
$$ 
(where $j^{\dagger}\cO_{\ol{\cX}'_{0,I,K}}$ denotes the sheaf of 
overconvergent sections for the formal smooth pair 
$(\cX_0,\ol{\cX}'_{0,I})$). Note that $A_{I,K}$ admits the canonical ring 
homomorphism $F^*: A_{I,K} \lra A_{I,K}$ induced by $F: \ol{\cX}_0 \lra 
\ol{\cX}_0$. We define the category $\FMIC(A_{I,K})$ as the category 
of pairs $((E,\nabla),\Psi)$, where $(E,\nabla)$ is a projective 
$A_{I,K}$-module of finite rank endowed with an integrable connection 
$\nabla: E \lra \bigoplus_{i=1}^d E dt_i$ and $\Psi$ is an 
isomorphism $A \otimes_{F^*,A} (E,\nabla) \os{=}{\lra} (E,\nabla)$ 
(where $A \otimes_{F^*,A} -$ means the scalar extension by $F^*$ as 
module endowed with integrable connection). 
Then we have the equivalence of categories 
$$ 
\FMIC(\cX_{0,K},\ol{\cX}'_{0,I,K}) \os{=}{\lra} 
\FMIC(j^{\dagger}\cO_{\ol{\cX}'_{0,I,K}}) \os{=}{\lra} 
\FMIC(A_{I,K}). 
$$
Since we have $\ol{\cX}'_{0,\emptyset} = \cX_0$ and 
$\ol{\cX}'_{0,[1,s]} = \ol{\cX}_0$, we have 
\begin{equation}\label{hojyo100}
\FMIC(\cX_{0,K}) \os{=}{\lra} 
\FMIC(A_{\emptyset,K}), \quad 
\FMIC(\cX_{0,K},\ol{\cX}_{0,K}) \os{=}{\lra} 
\FMIC(A_{[1,s],K})
\end{equation}
as particular cases. On the other hand, note that 
$\ol{\cX}'_0 \setminus Z_0 = \bigcup_{i=1}^s \ol{\cX}'_{0,\{i\}}$
and that 
$\ol{\cX}'_{0,\{i\}} \cap \ol{\cX}'_{0,\{i'\}} = \cX_0$ for any 
$1 \leq i,i' \leq s, i \not= i'$. So we have the equivalences 
\begin{align}
& \,\, \FMIC(\cX_{0,K},\ol{\cX}'_{0,K}) \label{hojyo101} \\ 
\os{=}{\lra} & 
\text{ fiber product of 
$\FMIC(\cX_{0,K},\ol{\cX}'_{0,\{i\},K}) \ra \FMIC(\cX_{0,K}) 
\,(i \in [1,s])$} \nonumber \\ 
\os{=}{\lra} & 
\text{ fiber product of 
$\FMIC(A_{\{i\},K}) \ra \FMIC(A_{\emptyset,K}) 
\,(i \in [1,s])$}. \nonumber 
\end{align}
For $I' \subseteq I \subseteq [1,s]$, let us denote `the 
scalar extension functor' 
$$\FMIC(A_{I,K}) \lra \FMIC(A_{I',K})$$
(which is induced by the canonical 
inclusion of the rings $A_{I,K} \hra A_{I',K}$) 
by $A_{I',K} \otimes_{A_{I,K}} -$. Then 
the claim 1 is equivalent to the following claim: \\
\quad \\
{\bf claim 2}: Let $((E_{\emptyset},\nabla_{\emptyset}), \Psi_{\emptyset})$ 
be an object in $\FMIC(A_{\emptyset,K})$ and for $1 \leq i \leq s$, 
let $((E_{\{i\}}, \nabla_{\{i\}}), \Psi_{\{i\}})$ 
be an object in $\FMIC(A_{\{i\},K})$ endowed with an isomorphism 
$$f_{\{i\}}: 
A_{\emptyset,K} \otimes_{A_{\{i\},K}} 
((E_{\{i\}}, \nabla_{\{i\}}), \Psi_{\{i\}}) \os{=}{\lra} 
((E_{\emptyset},\nabla_{\emptyset}), \Psi_{\emptyset}). $$ 
Then there exists an object 
$((E_{[1,s]},\nabla_{[1,s]}), \Psi_{[1,s]})$ 
in $\FMIC(A_{[1,s],K})$ endowed with an isomorphism 
$$ f_{[1,s]}: 
A_{\emptyset,K} \otimes_{A_{[1,s],K}} 
((E_{[1,s]},\nabla_{[1,s]}), \Psi_{[1,s]}) \os{=}{\lra} 
((E_{\emptyset},\nabla_{\emptyset}), \Psi_{\emptyset}). $$
\quad \\
Before the proof of claim 2, we prove preliminary facts on commutative 
algebra. \\
\quad \\ 
{\bf claim 3}: 
We have the following: 
\begin{enumerate}
\item The homomorphism 
$F^*: A_{I,K} \lra A_{I,K}$ is flat for any $I \subseteq [1,s]$. 
\item  
For $I, I' \subseteq [1,s]$, we have 
$A_{I,K} \cap A_{I',K} = A_{I \cup I',K}$. (Here the intersection is 
taken in $A_{\emptyset,K}$.) 
\item 
Let $I \subseteq [1,s]$. Then, 
For any projective $A_{\emptyset,K}$-module of finite rank $E_{\emptyset}$, 
there exists a projective $A_{I,K}$-module of finite rank 
$E_I$ endowed with an isomorphism 
$f_I: A_{\emptyset,K} \otimes_{A_{I,K}} E_I \os{=}{\lra} E_{\emptyset}$. 
If there are two such data $(E_I,f_I), (E'_I,f'_I)$, there exists an 
isomorphism $g: E_I \os{=}{\lra} E'_I$ such that the composite 
$$  E_{\emptyset} \os{f_I^{-1}}{\lra} A_{\emptyset,K} \otimes_{A_{I,K}} E_I 
\os{{\rm id} \otimes g}{\lra} 
 A_{\emptyset,K} \otimes_{A_{I,K}} E'_I \os{f'_I}{\lra} E_{\emptyset} $$ 
is the identity map. 
\end{enumerate}

We prove the claim 3. For $n := (n_1,...,n_d) \in \Z^d$ and 
$I \subseteq [1,d]$, we put $n_I := \sum_{i \in I}n_i$. 
Then, by definition, the ring $A_{I,K}$ has 
the following concrete description: 
$$ A_{I,K} := \{\sum_{n\in \Z^s \times \N^{d-s}} a_nt^n \,|\, 
a_n \in K, \exists \lambda \in [0,1), 
\forall J \subseteq I, |a_n| \lambda^{n_J} \to 0 \,(|n|\to\infty)\}. $$ 
(See \cite[3.1.7]{kedlayaI} for example.) 
Using this description, it is easy to see that $A_{I,K}$ is freely generated 
by $\{t^n\}_{n=(n_1,...,n_d), 0 \leq n_i \leq q-1}$ when it 
is regarded as an $A_{I,K}$-module via $F^*$. So we have (1). (2) 
follows from the above description and the inequality 
$$|a_n| \lambda^{n_J/2} \leq \max \{|a_n| \lambda^{n_{J\cap I}}, 
 |a_n| \lambda^{n_{J\setminus I}}\}$$
for any $J \subseteq I \cup I'$. To prove (3), we put 
$R_I := O_K\{t_i \,(1 \leq i \leq d), t_j^{-1}\,(j \in [1,s] \setminus I)\}$ 
and let $A_I$ be the weak completion of $R_I[t_j^{-1} \,(j \in I)]$ over  
$(R_I,pR_I)$ in the sense of Monsky-Washnitzer \cite{mw}. 
($A_I$ is a w.c.f.g. algebra over $(R_I,pR_I)$ by definition.) 
Then, using \cite[2.2, 2.3]{mw}, we see easily the equality 
$A_{I,K} = \Q \otimes_{\Z} A_I$ by direct calculation. Also, by 
\cite[Th\'eor\`eme 3]{etessemw}, $(\Spec A_I, \Spec A_I/pA_I)$ 
is a Henselian couple in the sense of \cite[18.5.5]{ega4-4}. 
Then we have the assertion (3) by 
\cite[Corollaire 1 du Th\'eor\`eme 3]{elkik}
 and \cite[Lemme in p.573]{elkik} 
(see also the proof of \cite[Proposition 3]{etessedescent}). \par 
Now we prove the claim 2, using claim 3. Note that 
it suffices to prove the following claim $(*)_j$ 
by induction on $j \,(1 \leq j \leq s)$: \\
\quad \\
$(*)_j$: \,With the assumption of the claim 2, there exists 
an object 
$((E_{[1,j]},\nabla_{[1,j]}), \Psi_{[1,j]})$ 
in $\FMIC(A_{[1,j],K})$ endowed with an isomorphism 
$$ f_{[1,j]}: 
A_{\emptyset,K} \otimes_{A_{[1,j],K}} 
((E_{[1,j]},\nabla_{[1,j]}), \Psi_{[1,j]}) \os{=}{\lra} 
((E_{\emptyset},\nabla_{\emptyset}), \Psi_{\emptyset}). $$
\quad \\
$(*)_1$ is true by assumption. Now we assume $(*)_{j-1}$ and prove 
$(*)_j$. (The argument in the following 
is inspired by that in \cite[I]{etessedescent}.) 
By (3) of claim 3, there exists a projective $A_{[1,j],K}$-module 
$E_{[1,j]}$ of finite rank endowed with an isomorphism 
$f_{[1,j]}: A_{\emptyset,K} \otimes_{A_{[1,j],K}} E_{[1,j]} \os{=}{\lra} 
E_{\emptyset}.$ Then, since both 
$A_{[1,j-1],K} \otimes_{A_{[1,j],K}} E_{[1,j]}$ and $E_{[1,j-1]}$ are 
isomorphic to $E_{\empty}$ after we apply 
$A_{\emptyset,K} \otimes_{A_{[1,j-1],K}} -$, they are canonically 
isomorphic. By the same reason, 
$A_{\{j\},K} \otimes_{A_{[1,j],K}} E_{[1,j]}$ and $E_{\{j\}}$ are 
canonically isomorphic. Since $E_{[1,j]}$ is projective, we can regard 
$E_{[1,j]}$ as a submodule of $E_{[1,j-1]}\cap E_{\{j\}}$ (the intersection 
is taken in $E_{\emptyset}$). Moreover, by 
claim 3 (2) and the projectivity of $E_{[1,j]}$, we see that 
$E_{[1,j]}$ is equal to $E_{[1,j-1]}\cap E_{\{j\}}$. 
Then, using claim 3 (1), we see that 
$A_{[1,j],K} \otimes_{F^*,A_{[1,j],K}} E_{[1,j]}$ is 
equal to 
$(A_{[1,j-1],K} \otimes_{F^*,A_{[1,j-1],K}} E_{[1,j-1]}) \cap 
(A_{\{j\},K} \otimes_{F^*,A_{\{j\},K}} E_{\{j\}})$. 
Then we can define 
$\nabla_{[1,j]}: E_{[1,j]} \lra \bigoplus_{i=1}^d E_{[1,j]} dt_i$ and 
$\Psi_{[1,j]}: A_{[1,j],K} \otimes_{F^*,A_{[1,j],K}} 
(E_{[1,j]},\nabla_{[1,j]}) \lra (E_{[1,j]},\nabla_{[1,j]})$ simply by 
\begin{align*}
& \nabla_{[1,j]} := \nabla_{[1,j-1]} |_{E_{[1,j]}} = 
\nabla_{\{j\}} |_{E_{[1,j]}}, \\ 
& \Psi_{[1,j]} := 
\Psi_{[1,j-1]} |_{A_{[1,j],K} \otimes_{F^*,A_{[1,j],K}} E_{[1,j]}} 
= 
\Psi_{\{j\}} |_{A_{[1,j],K} \otimes_{F^*,A_{[1,j],K}} E_{[1,j]}}. 
\end{align*}
Then it is easy to see that 
$((E_{[1,j]},\nabla_{[1,j]}), \Psi_{[1,j]})$ satisfies the required 
condition in $(*)_j$. So we have proved the claim 2 (hence the claim 1) 
and therefore 
the proof of the theorem is now finished. 
\end{proof}

\section{An application}

In this section, assume that $k$ is perfect. 
Let $X$ be a connected smooth scheme over $k$ and 
let $\pi_1(X)$ be the fundamental group of $X$ 
(we omit to write the base point). Denote by 
$\Rep_{K^{\sigma}}(\pi_1(X))$ the category of 
finite-dimensional continuous representations of $\pi_1(X)$ over 
$K^{\sigma}$, 
where $K^{\sigma}$ denotes the fixed field of $K$ by $\sigma$. 
In \cite{crew}, Crew proved that there exists the canonical equivalence 
\begin{equation}\label{creweq}
G: \Rep_{K^{\sigma}}(\pi(X)) \os{=}{\lra} \FIsoc(X)^{\circ}, 
\end{equation}
where $\FIsoc(X)^{\circ}$ denotes the category of unit-root convergent 
$F$-isocrystals on $X$, that is, the category of convergent 
$F$-isocrystals on $X$ satisfying certained condition called 
`unit-root condition'. (For the definition, see \cite{crew}.) \par 
Assume now that the above $X$ is enclosed to a pair $(X,\ol{X})$ and 
let $S$ be the set of all the discrete 
valuations of $k(X)$ centered on $\ol{X} \setminus X$. (As for the notion 
of the center of valuation, see \cite{kedlayaII} or \cite{vaquie}.) 
For $v \in S$, let us denote the 
inertia subgroup of $k(X)_v$ ($:=$ the completion of $k(X)$ with respect 
to $v$) by $I_v$. Then we have the natural homomorphism 
$I_v \lra \pi_1(X)$ which is well-defined up to conjugate. 
Then we define the subcategory of $\Rep_{K^{\sigma}}(\pi_1(X))$ 
with finite local monodromy along $S$ by 
$$ \Rep_{K^{\sigma}}^{S}(\pi_1(X)) := 
\{\rho \in \Rep_{K^{\sigma}}(\pi_1(X)) \,|\, 
\forall v \in S, \rho|_{I_v}\text{ has finite image}\}. $$ 
Tsuzuki has shown in \cite{tsuzukicurve} that, in the case where $X$ is 
a curve, the functor $G$ 
of Crew restricts to the equivalence of categories 
$$ \Rep_{K^{\sigma}}^{S}(\pi(X)) \os{=}{\lra} \FIsocd(X,\ol{X})^{\circ}, $$
where $\FIsocd(X,\ol{X})^{\circ}$ denotes the category of unit-root 
overconvergent $F$-isocrystals on $(X,\ol{X})$, which is the category 
$\FIsocd(X,\ol{X}) \cap \FIsoc(X)^{\circ}$. 
In higher dimensional case, Kedlaya \cite[2.3.7, 2.3.9]{kedlayaswanII}
proved the following result, based on a result of Tsuzuki in 
\cite{tsuzuki}: 
Let $\Rep_{K^{\sigma}}(\pi_1(X))'$ be the subcategory of 
$\Rep_{K^{\sigma}}(\pi_1(X))$ consisting of the representations $\rho$ 
such that, for some 
finite morphism $\varphi: \ol{Y} \lra \ol{X}$ with $Y := 
\varphi^{-1}(X) \lra X$ finite 
etale, the restriction of $\rho$ to $I_v$ is trivial 
for any discrete valuation $v$ of 
$k(Y)$ centered on $\ol{Y} \setminus Y$. 
(He calls such a representation `potentially unramified'.) 
Then he has shown that the functor $G$ 
of Crew restricts to the equivalence of categories 
$$ \Rep_{K^{\sigma}}(\pi(X))' \os{=}{\lra} \FIsocd(X,\ol{X})^{\circ}. $$
Note that we have the following proposition, which implies that 
Kedlaya's result is actually a generalization of Tsuzuki's one: 

\begin{proposition}
With the above notation $($with $X$ of arbitrary dimension$)$, 
we have $\Rep_{K^{\sigma}}^S(\pi_1(X)) = \Rep_{K^{\sigma}}(\pi_1(X))'$.  
\end{proposition}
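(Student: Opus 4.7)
The plan is to prove the two inclusions separately. The inclusion $\Rep_{K^{\sigma}}(\pi_1(X))' \subseteq \Rep_{K^{\sigma}}^S(\pi_1(X))$ should follow from a direct Galois-theoretic check: given a potentially unramified $\rho$ with witness $\varphi:\ol{Y}\lra\ol{X}$ and $v\in S$, I would lift $v$ to a discrete valuation $w$ of $k(Y)$; since $\varphi$ is finite, the center of $w$ on $\ol{Y}$ maps to the center of $v$ on $\ol{X}$, which lies in $\ol{X}\setminus X$, so the center of $w$ lies in $\varphi^{-1}(\ol{X}\setminus X)=\ol{Y}\setminus Y$. By hypothesis $\rho|_{I_w}$ is trivial, and since $I_w$ has finite index in $I_v$ (because $k(Y)_w/k(X)_v$ is a finite extension), we conclude that $\rho(I_v)$ is finite.

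For the reverse inclusion, the key observation is that the image $G:=\rho(\pi_1(X))$, being a compact subgroup of $GL_n(K^{\sigma})$, is a $p$-adic Lie group. Therefore it admits an open torsion-free subgroup: after conjugating $G$ into $GL_n(\cO_{K^{\sigma}})$, I would take the intersection of $G$ with a sufficiently deep principal congruence subgroup $\Gamma_N$ of $GL_n(\cO_{K^{\sigma}})$, which is torsion-free once $N$ is large (say $N>1/(p-1)$ in valuation). Replacing this open subgroup by its normal core (which is still open of finite index in the compact group $G$, and still torsion-free as a subgroup of a torsion-free group) produces an open normal torsion-free subgroup $U \triangleleft G$. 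Since each $\rho(I_v)$ for $v\in S$ is a finite (hence torsion) group, we obtain $U \cap \rho(I_v) = \{1\}$ for every $v\in S$.

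I would then let $Y\lra X$ be the finite \'etale Galois cover corresponding to the quotient $\pi_1(X)\twoheadrightarrow G/U$, and take $\ol{Y}$ to be the normalization of $\ol{X}$ in $k(Y)$; this gives a finite morphism $\varphi:\ol{Y}\lra\ol{X}$, and because $X$ is smooth (hence normal) one checks $\varphi^{-1}(X)=Y$, so the hypotheses of the definition of $\Rep_{K^{\sigma}}(\pi_1(X))'$ are in force. For any discrete valuation $w$ of $k(Y)$ centered on $\ol{Y}\setminus Y$, its restriction $v:=w|_{k(X)}$ is again a discrete valuation (as $k(Y)/k(X)$ is finite separable), and its center on $\ol{X}$ is the image of the center of $w$, hence lies in $\ol{X}\setminus X$, so $v \in S$. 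The natural containments $I_w\subseteq I_v$ and $I_w\subseteq \pi_1(Y)$, combined with $\rho(\pi_1(Y))\subseteq U$, then force $\rho(I_w)\subseteq \rho(I_v)\cap U=\{1\}$, showing that $\rho$ is potentially unramified.

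I do not anticipate a substantial obstacle: the argument is purely Galois- and Lie-theoretic, and rests on the single non-formal input that a compact $p$-adic Lie group has an open torsion-free subgroup. The part requiring the most care is the bookkeeping of inertia groups under the restriction from $\pi_1(X)$ to $\pi_1(Y)$ and under the finite field extension $k(Y)/k(X)$; in particular, one must keep in mind that $I_v$ and $I_w$ are only defined up to conjugacy and verify that all the containments and finite-index statements used above hold for compatible choices of extensions of valuations and of algebraic closures, which is standard.
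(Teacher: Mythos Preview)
Your proposal is correct and follows essentially the same route as the paper. Both directions match: for $\Rep_{K^{\sigma}}(\pi_1(X))' \subseteq \Rep_{K^{\sigma}}^S(\pi_1(X))$ the paper also lifts $v$ to $k(Y)$ and uses that $I_v/I_{v'}$ is finite; for the reverse inclusion the paper produces the finite \'etale cover from the kernel of the reduction $\pi_1(X)\to GL_n(O_{K^{\sigma}}/2pO_{K^{\sigma}})$ and then argues that each $\rho(I_w)$ is a finite subgroup of the torsion-free group $\Ker(GL_n(O_{K^{\sigma}})\to GL_n(O_{K^{\sigma}}/2pO_{K^{\sigma}}))$, hence trivial --- exactly your ``deep congruence subgroup is torsion-free'' step, just with the explicit level $2p$ (which is already normal, so your passage to the normal core is unnecessary there).
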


\begin{proof}
First let $\rho$ be an object in $\Rep_{K^{\sigma}}(\pi_1(X))'$. Let us 
take $\varphi: \ol{Y} \lra \ol{X}$, $Y$ as above and let 
$v \in S$. Let $v'$ be a valuation which 
extends $v$ (hence centered on $\ol{Y} \setminus Y$). Then $\rho|_{I_{v'}}$ is trivial. Since $I_v/I_{v'}$ is finite, 
we see that $\rho |_{I_v}$ has finite image and so 
$\rho \in \Rep_{K^{\sigma}}^S(\pi_1(X))$. \par 
On the other hand, let $\rho$ be an object in 
$\Rep_{K^{\sigma}}^S(\pi_1(X))$. 
We may assume that 
$\rho$ has the form $\pi_1(X) \lra GL_n(O_{K^{\sigma}})$ 
(where $O_{K^{\sigma}}$ is the ring of integers of $K^{\sigma}$). Let 
$\ol{\rho}$ be the composite 
$\pi_1(X) \lra GL_n(O_{K^{\sigma}}) \twoheadrightarrow 
GL_n(O_{K^{\sigma}}/2pO_{K^{\sigma}})$ and let $\varphi: Y \lra X$ be the 
finite etale morphism corresponding to the subgroup 
$\Ker (\ol{\rho}) \subseteq \pi_1(X)$. Let 
$\ol{Y} \lra \ol{X}$ be the normalization of $\ol{X}$ in $k(Y)$, which 
will be also denoted by $\varphi$. Let $v$ be a discrete valuation 
of $k(Y)$ centered on $\ol{Y} \setminus Y$. Then, by definition of $\rho$, 
$\rho |_{I_{v |_{k(X)}}}$ has finite image. Hence so is 
$\rho |_{I_{v}}$. On the other hand, the image of 
$\rho |_{I_{v}}$ is contained in 
$\Ker(GL_n(O_{K^{\sigma}}) \ra GL_n(O_{K^{\sigma}}/2pO_{K^{\sigma}}))$ and 
$\Ker(GL_n(O_{K^{\sigma}}) \ra GL_n(O_{K^{\sigma}}/2pO_{K^{\sigma}}))$
contains no nontrivial finite subgroup. So we can conclude that 
$\rho |_{I_{v}}$ is trivial. Hence we have 
$\rho \in \Rep_{K^{\sigma}}(\pi_1(X))'$. 
\end{proof}

One drawback in the above-mentioned result of Kedlaya 
in higher-dimensional case 
is that the number of valuations which we should look at is infinite.  
In this section, we give an alternative formulation of 
`the subcategory of $\Rep_{K^{\sigma}}(\pi_1(X))$ 
with finite local monodromy' 
which looks at only finitely many 
discrete valuations of $k(X)$ and which is 
still equivalent (via $G$) to $\FIsoc(X,\ol{X})^{\circ}$. 
To do so, let us take 
a separable alteration $f:\ol{X}' \lra \ol{X}$ such that 
$(X' := f^{-1}(X), \ol{X}')$ is a smooth pair. (Such $f$ exists by 
de Jong's theorem \cite[4.1]{dejong}). 
Let $\ol{X}' \setminus X' = \bigcup_{i=1}^r Y_i$ 
be the decomposition of $\ol{X}' \setminus X'$ 
(with reduced closed subscheme structure) 
into irreducible 
components, let $v_i\,(1 \leq i \leq r)$ be the discrete valuation on 
$k(X')$ corresponding to $Y_i$ and let us put 
$S' := \{v_i |_{k(X)} \,|\, 1 \leq i \leq r\}$. (Then $S'$ is a finite 
subset of $S$.) We define the subcategory of $\Rep_{K^{\sigma}}(\pi_1(X))$ 
with finite local monodromy along $S'$ by 
$$ \Rep_{K^{\sigma}}^{S'}(\pi_1(X)) := 
\{\rho \in \Rep_{K^{\sigma}}(\pi_1(X)) \,|\, 
\forall v \in S', \rho|_{I_v}\text{ has finite image}\}. $$ 
Then the main theorem in this section is as follows: 

\begin{theorem}\label{app}
Let the notations be as above. Then the equivalence $G$ of Crew 
\eqref{creweq} restricts to the equivalence 
$$ \Rep_{K^{\sigma}}^{S'}(\pi(X)) 
\os{=}{\lra} \FIsocd(X,\ol{X})^{\circ}. $$
In particular, we have the equality 
$
\Rep_{K^{\sigma}}^{S}(\pi_1(X)) = \Rep_{K^{\sigma}}^{S'}(\pi_1(X))$. 
\end{theorem}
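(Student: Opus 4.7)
The direction $\FIsocd(X,\ol{X})^{\circ} \to \Rep_{K^{\sigma}}^{S'}(\pi_1(X))$ is immediate from Kedlaya's equivalence $\FIsocd(X,\ol{X})^{\circ} \cong \Rep_{K^{\sigma}}(\pi_1(X))'$, the preceding Proposition $\Rep_{K^{\sigma}}^{S}(\pi_1(X)) = \Rep_{K^{\sigma}}(\pi_1(X))'$, and the inclusion $\Rep^{S} \subseteq \Rep^{S'}$ that comes from $S' \subseteq S$. Consequently the whole task reduces to the reverse inclusion $\Rep_{K^{\sigma}}^{S'}(\pi_1(X)) \subseteq \Rep_{K^{\sigma}}^{S}(\pi_1(X))$; once this is established, transporting along $G$ produces the asserted equivalence.

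Given $\rho \in \Rep_{K^{\sigma}}^{S'}(\pi_1(X))$, my first move is to push the problem up to the smooth pair $(X',\ol{X}')$. For each divisorial valuation $v_i$ of $k(X')$ the hypothesis $v_i|_{k(X)} \in S'$, together with the fact that $I_{v_i}$ is a finite-index subgroup of $I_{v_i|_{k(X)}}$ coming from the finite extension of completions $k(X)_{v_i|_{k(X)}} \subseteq k(X')_{v_i}$, forces $\rho|_{I_{v_i}}$ to have finite image. After conjugation I may assume $\rho$ factors through $GL_n(O_{K^{\sigma}})$; imitating the proof of the preceding Proposition, I let $\varphi: Y \to X$ be the finite etale cover corresponding to $\Ker(\ol{\rho})$ for $\ol{\rho}$ the reduction of $\rho$ mod $2p$, form the base change $Y' := Y \times_X X'$, and let $\ol{Y}'$ be the normalization of $\ol{X}'$ in $k(Y')$. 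For any divisorial valuation $w$ of $k(Y')$ centered on a component of $\ol{Y}' \setminus Y'$ lying over some $Y_i$, the restriction $w|_{k(X')}$ is proportional to $v_i$, so $\rho|_{I_w}$ has finite image; since this image lies in the torsion-free group $\Ker(GL_n(O_{K^{\sigma}}) \to GL_n(O_{K^{\sigma}}/2pO_{K^{\sigma}}))$, it must be trivial.

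Because $\ol{Y}'$ need not be smooth, I apply de Jong's theorem to obtain a separable alteration $g: \ol{Y}'' \to \ol{Y}'$ with $(Y'' := g^{-1}(Y'), \ol{Y}'')$ a smooth pair; the same dimension-and-inertia argument shows $\rho|_{I_u}$ is trivial for every divisorial $u$ of $k(Y'')$ centered on $\ol{Y}'' \setminus Y''$. Setting $\cF := G(\rho|_{\pi_1(Y'')}) \in \FIsoc(Y'')^{\circ}$, I now extend $\cF$ overconvergently. Near each generic point $\eta$ of an irreducible component of $\ol{Y}'' \setminus Y''$, I choose an etale-local presentation of $(Y'',\ol{Y}'')$ as a product with a one-dimensional factor transverse to the boundary; the triviality of $\rho|_{I_\eta}$ together with Tsuzuki's one-dimensional result then produces an extension of $\cF$ to an object of $\FMIC$ on a strict neighborhood of the generic fiber in that local chart. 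This verifies the hypothesis $(*)$ of Corollary \ref{cor} over a Zariski open $V \subseteq \ol{Y}''$ containing every such $\eta$, yielding $\cF|_{Y'' \cap V} \in \FIsocd(Y'' \cap V, V)^{\circ}$. Since $\ol{Y}'' \setminus V$ has codimension $\geq 2$ in $\ol{Y}''$, Theorem \ref{main} extends $\cF$ to an object of $\FIsocd(Y'', \ol{Y}'')^{\circ}$.

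Kedlaya's theorem for $(Y'',\ol{Y}'')$ combined with the preceding Proposition now places $\rho|_{\pi_1(Y'')}$ in $\Rep_{K^{\sigma}}^{S(Y'')}(\pi_1(Y''))$, where $S(Y'')$ denotes the set of all discrete valuations of $k(Y'')$ centered on $\ol{Y}'' \setminus Y''$. To conclude, given any $v \in S$ I extend $v$ successively to discrete valuations $v', v'', v'''$ of $k(X'), k(Y'), k(Y'')$; each extension is centered on the boundary of the respective pair, since the centers map to one another under the proper morphisms $f$, $\ol{Y}' \to \ol{X}'$, and $g$. Then $\rho|_{I_{v'''}}$ has finite image, and because $I_{v'''}$ embeds into $I_v$ with finite index, $\rho|_{I_v}$ has finite image as well, placing $\rho$ in $\Rep_{K^{\sigma}}^{S}(\pi_1(X))$. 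The main obstacle in this plan is the etale-local extension step of the third paragraph: verifying that trivial divisorial inertia on a smooth pair yields an $F$-$\nabla$-module extension on a strict neighborhood of the generic fiber in a suitable local chart, so that Corollary \ref{cor} and the purity Theorem \ref{main} may be brought to bear.
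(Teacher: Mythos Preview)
There is a genuine gap in the third paragraph. After passing to the de~Jong alteration $g:\ol{Y}''\to\ol{Y}'$ you assert that ``the same dimension-and-inertia argument shows $\rho|_{I_u}$ is trivial for every divisorial $u$ of $k(Y'')$ centered on $\ol{Y}''\setminus Y''$''. This is not justified. An alteration is only generically finite, so some irreducible components of $\ol{Y}''\setminus Y''$ can be \emph{exceptional}, i.e.\ map into a locus of codimension $\geq 2$ in $\ol{Y}'$ (hence in $\ol{X}'$). For such a component the restriction $u|_{k(X')}$ is not equivalent to any of the divisorial valuations $v_1,\dots,v_r$, so $u|_{k(X)}$ need not lie in $S'$ and you have no control over $\rho|_{I_u}$. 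The torsion-free kernel trick then does nothing for you. Moreover, since these exceptional components have codimension~$1$ in $\ol{Y}''$, you cannot simply throw them into the closed set $\ol{Y}''\setminus V$ and invoke Theorem~\ref{main} on $\ol{Y}''$: the hypothesis $\codim\geq 2$ fails there. (Incidentally, if your triviality claim \emph{were} correct for every boundary component, Zariski--Nagata purity for $\pi_1$ would already give $\rho|_{\pi_1(Y'')}\to\pi_1(\ol{Y}'')$ and hence, via Crew, an extension to all of $\ol{Y}''$, making your appeal to Corollary~\ref{cor} and Theorem~\ref{main} superfluous.)

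The paper's proof confronts exactly this issue. It separates the boundary components of the alteration into those with codimension-$1$ image in $\ol{X}'$ (where the $2p$-kernel argument does give trivial inertia) and those with image of codimension $\geq 2$, collected into a closed set $Z\subset\ol{X}'$. One then knows only that $G(\rho)$ extends to a convergent $F$-isocrystal on the complement of the preimage of $Z$ upstairs. The crucial step you are missing is a \emph{descent} of overconvergent $F$-isocrystals along the alteration (the paper uses \cite[2.1.2]{caro}) to push this down to an object of $\FIsocd(X',\ol{X}'\setminus Z)^{\circ}$; now $Z$ has codimension $\geq 2$ in $\ol{X}'$ and Theorem~\ref{main} applies on $\ol{X}'$, after which a second application of Caro's descent along $f$ brings the object down to $\FIsocd(X,\ol{X})^{\circ}$. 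In short: purity must be applied on $\ol{X}'$, not on $\ol{Y}''$, and getting there requires an alteration-descent statement for overconvergent $F$-isocrystals that your argument does not invoke.
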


\begin{proof}
We prove the following two claims: 
\begin{enumerate}
\item $\Rep_{K^{\sigma}}^S(\pi_1(X)) \supseteq 
G^{-1}(\FIsocd(X,\ol{X})^{\circ})$. 
\item $\Rep_{K^{\sigma}}^{S'}(\pi_1(X)) \subseteq 
G^{-1}(\FIsocd(X,\ol{X})^{\circ})$. 
\end{enumerate}
Since we have 
$\Rep_{K^{\sigma}}^{S}(\pi(X)) \subseteq \Rep_{K^{\sigma}}^{S'}(\pi(X))$ by 
definition, the above two claims imply the theorem. \par 
Let us prove (1). (Here we give a proof which is 
different from that in \cite[2.3.7, 2.3.9]{kedlayaswanII}.) 
Let $\cE$ be an object in $\FIsocd(X,\ol{X})^{\circ}$ and 
put $\rho := G^{-1}(\cE)$. By \cite[1.3.1]{tsuzuki}, there exists a 
separable alteration $\varphi: \ol{X}'' \lra \ol{X}$ such that, if we put 
$X'' := \varphi^{-1}(X)$, the restriction of $\cE$ to 
$\FIsocd(X'',\ol{X}'')^{\circ}$ extends to an object in 
$\FIsoc(\ol{X}'')^{\circ}$. This implies (via Crew's equivalence for 
$\ol{X}''$) that the restriction of $\rho$ to $\pi_1(X'')$ factors 
through $\pi_1(\ol{X}'')$. Now let us take a discrete valuation $v$ of 
$k(X)$ centered on $\ol{X} \setminus X$ and let $v'$ an extension of 
$v$ to $k(X'')$ (hence centered on $\ol{X}'' \setminus X''$). 
Let $z$ be the center of $v'$ and let $O_{v'}$ be the valuation ring 
of $k(X')_{v'}$. Then the homomorphism 
\begin{equation}\label{map1}
\pi_1(\Spec k(X'')_{v'}) \lra \pi_1(X'') \lra \pi_1(\ol{X}'')
\end{equation}
factors as 
\begin{equation}\label{map2}
\pi_1(\Spec k(X'')_{v'}) \lra \pi_1(\Spec O_{v'}) \lra 
\pi_1(\Spec \cO_{\ol{X}'',z}) \lra \pi_1(\ol{X}''). 
\end{equation}
Since the restriction $\rho$ to $\pi_1(\Spec k(X'')_{v'})$ factors 
through \eqref{map1}$=$\eqref{map2}, we see that $\rho |_{I_{v'}}$ is 
trivial. Hence $\rho_{I_v}$ is finite and so we have 
$\rho \in \Rep_{K^{\sigma}}^{S}(\pi(X))$. So the proof of (1) is 
finished. \par 
Let us prove (2). 
Let us take $\rho \in \Rep_{K^{\sigma}}^{S'}(\pi_1(X))$. We may assume that 
$\rho$ has the form $\pi_1(X) \lra GL_n(O_{K^{\sigma}})$. 
Let $\ol{\rho}$ be the composite 
$\pi_1(X) \lra GL_n(O_{K^{\sigma}}) \twoheadrightarrow 
GL_n(O_{K^{\sigma}}/2pO_{K^{\sigma}})$ and let $X'' \lra X'$ be the 
finite etale morphism corresponding to the subgroup 
$\Ker (\ol{\rho} |_{\pi_1(X')}) \subseteq \pi_1(X')$. Let 
$g: \ol{X}'' \lra \ol{X}'$ be the normalization of $\ol{X}'$ in $k(X'')$ and 
let us take a separable alteration $h: \ol{X}''' \lra \ol{X}''$ such that 
the pair $(X''':=h^{-1}(X''), \ol{X}''')$ is a smooth pair. Then 
$\rho$ induces the morphism 
\begin{equation}\label{rho-1}
\rho |_{\pi_1(X''')}: \pi_1(X''') \lra 
\Ker(GL_n(O_{K^{\sigma}}) \ra GL_n(O_{K^{\sigma}}/2pO_{K^{\sigma}})). 
\end{equation}
Let us put $\ol{X}''' \setminus X''' = 
\bigcup_{i=1}^a Y'_i \cup \bigcup_{i=a+1}^b Y'_i$, where 
$Y'_i$'s \,$(1 \leq i \leq a)$ are the irreducible components of 
$\ol{X}''' \setminus X'''$ with $\codim (g\circ h(Y'_i), \ol{X}') = 1$ 
and $Y'_i$'s \,$(a+1 \leq i \leq b)$ are the irreducible components of 
$\ol{X}''' \setminus X'''$ with $\codim (g\circ h(Y'_i), \ol{X}') \geq 2$. 
Let us put $Z := g\circ h (\bigcup_{i=a+1}^b Y'_i)$. Then $g\circ h$ 
induces the morphism of smooth pairs 
$g \circ h: (X''', \ol{X}''' \setminus (g \circ h)^{-1}(Z)) \lra 
(X',\ol{X}' \setminus Z)$. For $1 \leq i \leq a$, let $v'_i$ be the 
discrete valuation of $k(X''')$ corresponding to $Y'_i$. Then, by 
definition, $v'_i |_{k(X)} \in S'$. Hence 
$\rho |_{I_{v'_i |_{k(X)}}}$ has finite image and so is 
$\rho |_{I_{v'_i}}$. On the other hand, by \eqref{rho-1}, 
the image of $\rho |_{I_{v'_i}}$ is contained in 
$\Ker(GL_n(O_{K^{\sigma}}) \ra GL_n(O_{K^{\sigma}}/2pO_{K^{\sigma}}))$. 
Since 
$\Ker(GL_n(O_{K^{\sigma}}) \ra GL_n(O_{K^{\sigma}}/2pO_{K^{\sigma}}))$
contains no nontrivial finite subgroup, 
$\rho |_{I_{v'_i}}$ is trivial for any $1 \leq i \leq a$. So 
we see that $\rho |_{\pi_1(X''')}$ factors through 
$\pi_1(\ol{X}''' \setminus (g \circ h)^{-1}(Z))$. 
This implies (via Crew's equivalence) that the restriction of 
$G(\rho) \in \FIsoc(X)^{\circ}$ to $\FIsoc(X''')^{\circ}$ extends to 
$\FIsoc(\ol{X}''' \setminus (g \circ h)^{-1}(Z))^{\circ}$ 
(hence to $\FIsocd(X''', \ol{X}''' \setminus (g \circ h)^{-1}(Z))^{\circ}$). 
Then, by \cite[2.1.2]{caro} (applied to $g \circ h$), 
we see that the restriction of $G(\rho)$ to $\FIsoc(X')^{\circ}$ 
extends to 
$\FIsocd(X', \ol{X}' \setminus Z)^{\circ}$. By Theorem \ref{main}, 
it extends to $\FIsocd(X', \ol{X}')^{\circ}$, since $\codim (Z,\ol{X}') \geq 
2$. Then, again by \cite[2.1.2]{caro} (applied to $f$), we see that 
$G(\rho)$ extends to $\FIsocd(X,\ol{X})^{\circ}$. 
So we have proved (2). 
\end{proof}

\end{document}